\def\1{$\ }
\def\2{\ $}
\def\N{\mbox{\makebox[.2em][l]{I}N}}
\def\cK{{\cal K}}
\def\cC{{\cal C}}
\def\cE{{\cal E}}
\def\cH{{\cal H}}
\def\cP{{\cal P}}
\def\hqed{\hfill\framebox(6,6){\ }}
\def\ul{\underline}
\def\inN{N_D^-}
\def\outN{N_D^+}
\def\eie{e^{\mbox{\tiny{\it EI}}}}
\def\eiE{\cE^{EI}}
\def\kEI{k^{\mbox{\tiny{\it EI}}}}
\def\ei{{EI}}
\begin{document}
\title{Edge intersection hypergraphs -- a new hypergraph concept}
\author{Martin Sonntag\inst{1} \and Hanns-Martin Teichert\inst{2}}


%
%
\institute{Faculty of Mathematics and Computer Science, Technische Universit\"{a}t Bergakademie Freiberg, Pr\"{u}ferstra\ss e 1, 09596 Freiberg, Germany
\and Institute of Mathematics, University of L\"{u}beck, Ratzeburger Allee 160, 23562 L\"{u}beck, Germany
}
\maketitle
\begin{abstract}
If $\cH=(V,\cE)$ is a hypergraph, its {\it edge intersection hypergraph} $EI(\cH)=(V,\eiE)$ has the edge set $\eiE=\{e_1 \cap e_2 \ |\ e_1, e_2 \in \cE \ \wedge \ e_1 \neq e_2  \ \wedge  \ |e_1 \cap e_2 |\geq2\}$. Besides investigating several  structural properties of edge intersection hypergraphs,  we prove that all trees but seven exceptional ones are edge intersection hypergraphs of  3-uniform hypergraphs.
\end{abstract}
\begin{keyword}
Edge intersection hypergraph
\end{keyword}
\small{\bf  Mathematics Subject Classification 2010:} 05C65
%
\section{Introduction and basic definitions}
All hypergraphs $\cH = (V(\cH), \cE(\cH))$ and (undirected) graphs $G=(V(G), E(G))$  considered in the following may have isolated vertices but no multiple edges or loops.

A hypergraph $\cH = (V, \cE)$ is {\em $k$-uniform} if all hyperedges $e \in \cE$ have the cardinality $k$.
Trivially, any 2-uniform hypergraph $\cH$ is a graph.
The {\em degree} $d(v)$ (or $d_{\cH}(v)$) of a vertex $v \in V$ is the number of hyperedges $e \in \cE$ being incident to the vertex $v$.
$\cH$ is {\em linear} if any two distinct hyperedges $e, e' \in \cE$ have at most one vertex in common.

If $\cH=(V,\cE)$ is a hypergraph, its {\it edge intersection hypergraph} $EI(\cH)=(V,\eiE)$ has the edge set $\eiE=\{e_1 \cap e_2 \ |\ e_1, e_2 \in \cE \ \wedge \ e_1 \neq e_2  \ \wedge \ |e_1 \cap e_2 |\geq2\}$. For $k \ge 1$, the $k$-th iteration of the $EI$-operator is defined to be $EI^k(\cH):= EI(EI^{k-1}(\cH))$, where  $EI^0(\cH):= \cH$. Moreover, the {\em EI-number} $\kEI(\cH)$ is the smallest $k \in \N$ such that $\cE(EI^k(\cH)) = \emptyset.$

Let $e = \{ v_1, v_2, \ldots, v_l \} \in \eiE$ be a hyperedge in $EI(\cH)$. By definition, in $\cH$ there exist (at least) two hyperedges $e_1, e_2 \in \cE(\cH)$ both containing all the vertices $v_1, v_2, \ldots, v_l$, more precisely $\{ v_1, v_2, \ldots, v_l \} = e_1 \cap e_2 $. In this sense, the hyperedges of $EI(\cH)$ describe sets $\{ v_1, v_2, \ldots, v_l \}$ of vertices having a certain, "strong" neighborhood relation in the original hypergraph $\cH$.

As an application, we consider a hypergraph $\cH = ( V, \cE)$ representing a communication system. The vertices $v_1, v_2, \ldots, v_n \in V$ and the hyperedges $e_1, e_2, \ldots, e_m \in \cE$ correspond to $n$ people and to $m$ (independent) communication channels, respectively. A group $\{ v_{i_1}, v_{i_2}, \ldots, v_{i_k} \} \subseteq V$ of people can communicate in a conference call if and only if their members use one and the same communication channel, i.e.  there is a hyperedge $e \in \cE$ such that $\{ v_{i_1}, v_{i_2}, \ldots, v_{i_k} \} \subseteq e$.
If we ask whether or not $v_{i_1}, v_{i_2}, \ldots, v_{i_k} $ can even communicate in a conference call after the breakdown of an arbitrarily chosen communication channel, then this question is equivalent to the problem of the existence of a hyperedge $\eie \in \eiE$ in the edge intersection hypergraph $EI(\cH)$ containing all these vertices, i.e.  $\{ v_{i_1}, v_{i_2}, \ldots, v_{i_k} \} \subseteq \eie$.

Note that there is a significant difference to the well-known notions of the {\em intersection graph} (cf. \cite{Nai}) or {\em edge intersection graph} (cf. \cite{Sku}) $G=(V(G), E(G))$  {\em of linear hypergraphs} $\cH = (V(\cH), \cE(\cH))$, since there we have $V(G) = \cE(\cH)$.

In \cite{Gol}, \cite{Bie} and  \cite{Cam} the same notation is used for so-called {\em edge intersection graphs of paths}, but there the authors consider paths in a given graph $G$ and the vertices of the resulting edge intersection graph correspond to these  paths in the original graph $G$.

Obviously, for certain hypergraphs $\cH$ the edge intersection hypergraph $EI(\cH)$  can be 2-uniform; in this case $EI(\cH)$ is a simple, undirected graph $G$. But in contrast to the intersection graphs or  edge intersection graphs mentioned above,   $G=EI(\cH)$ and $\cH$ have one and the same vertex set $V(G) = V(\cH)$.
Therefore we consistently use our notion "edge intersection hypergraph" also when this hypergraph is 2-uniform.

First of all, in Section 2 we investigate structural properties of edge intersection hypergraphs.

In Section 3 we consider 2-uniform edge intersection hypergraphs.
In doing so, a natural question arises.

\medskip
{\bf Problem 1.}
Which graphs are edge intersection hypergraphs?

\medskip
We will show that all but a few cycles, paths and trees are edge intersection hypergraphs of 3-uniform hypergraphs; the exceptional graphs have at most 6 vertices.
Whereas the proofs for cycles, paths and stars are simple, in the case of arbitrary trees we will make use of a special kind of induction.

\section{Some structural properties of edge intersection hypergraphs}

\begin{theorem}
\label{STA} \begin{enumerate}
	\item[(i)] For each linear hypergraph $\cH=(V,\cE)$ with $V\notin\cE$ there is a hypergraph $\cH'=(V,\cE')$ with $EI(\cH')=\cH$.
	\item[(ii)] Let $\cH=(V,\cE)$ be a hypergraph containing $e_1,e_2 \in \cE$ with $|e_1 \cap e_2|\ge 2$, $e_1 \not\subseteq e_2$, $ e_2 \not\subseteq e_1$ and $\cH'=(V,\cE')$ be a hypergraph with $\cH=EI(\cH')$. Then there is an $\tilde{e}\in \cE \setminus\{e_1,e_2\}$ with $e_1 \cap e_2 \subseteq \tilde{e}$.
	\item[(iii)] Not every hypergraph $\cH=(V,\cE)$ with $V \notin \cE$ is an edge intersection hypergraph of some hypergraph $\cH'=(V,\cE')$.
\end{enumerate}
\end{theorem}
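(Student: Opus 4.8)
The plan is to dispatch (i) and (iii) quickly and put the real work into (ii).

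\medskip\noindent\emph{Part (i).} I would simply take $\cH':=(V,\cE\cup\{V\})$. Because $V\notin\cE$, this is a genuine hypergraph (no repeated hyperedge), with the same vertex set $V$. Now compute $EI(\cH')$. For two distinct $e,e'\in\cE$ we have $|e\cap e'|\le 1$ since $\cH$ is linear, so such pairs contribute nothing. For $e\in\cE$ the pair $e,V$ gives $e\cap V=e$, which becomes a hyperedge of $EI(\cH')$ as soon as $|e|\ge 2$. Finally $V$ itself cannot occur as an intersection, since $X\cap Y=V$ forces $X=Y=V$. Hence $EI(\cH')=\{e\in\cE:\ |e|\ge 2\}$, which is $\cE$ (all hyperedges being tacitly assumed to have at least two vertices, as is necessary for them to appear in any edge intersection hypergraph at all). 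The hypothesis $V\notin\cE$ is used only to make $V$ available as a "universal partner'' edge.

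\medskip\noindent\emph{Part (ii).} Put $D:=e_1\cap e_2$, so $|D|\ge 2$. Since $e_1,e_2\in\cE(EI(\cH'))$, pick $f_1,g_1\in\cE(\cH')$ with $f_1\ne g_1$ and $e_1=f_1\cap g_1$, and $f_2,g_2\in\cE(\cH')$ with $f_2\ne g_2$ and $e_2=f_2\cap g_2$. Each of $f_1,g_1,f_2,g_2$ contains $D$, and $\{f_1,g_1\}\ne\{f_2,g_2\}$ (otherwise $e_1=e_2$, contradicting $e_1\not\subseteq e_2$); so these four sets comprise at least three distinct edges of $\cH'$. The key observation is: whenever $X,Y\in\cE(\cH')$ are distinct and both contain $D$, the set $X\cap Y$ contains $D$, hence has at least two vertices, hence is a hyperedge of $EI(\cH')=\cH$. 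I would then show that, among the pairwise intersections of the distinct members of $\{f_1,g_1,f_2,g_2\}$, at least one differs from both $e_1$ and $e_2$; any such intersection lies in $\cE$ and contains $D$, so it is the required $\tilde e$.

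\medskip To see that these pairwise intersections cannot all equal $e_1$ or $e_2$, note that $X\cap Y=e_i$ forces $e_i\subseteq X$ and $e_i\subseteq Y$. Feeding a few of these containments back in, one either finds an edge of $\cH'$ containing $e_1\cup e_2$ — in which case its relevant intersection with another of the four edges properly contains both $e_1$ and $e_2$, hence equals neither — or one derives $e_1\subseteq e_2$ or $e_2\subseteq e_1$, contradicting the hypotheses. Turning this into a clean proof requires a small case distinction (whether $|\{f_1,g_1,f_2,g_2\}|$ equals $3$ or $4$, and which of $f_2,g_2$ lies outside $\{f_1,g_1\}$); this is the only real obstacle, and the non-inclusion conditions $e_1\not\subseteq e_2$, $e_2\not\subseteq e_1$ are exactly what kills every bad branch.

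\medskip\noindent\emph{Part (iii).} This now follows from (ii) by a one-line example. Take $V=\{1,2,3,4\}$ and $\cE=\{e_1,e_2\}$ with $e_1=\{1,2,3\}$ and $e_2=\{2,3,4\}$. Then $V\notin\cE$, $|e_1\cap e_2|=|\{2,3\}|=2$, and neither of $e_1,e_2$ contains the other. If there were a hypergraph $\cH'=(V,\cE')$ with $EI(\cH')=\cH$, then part (ii) would supply some $\tilde e\in\cE\setminus\{e_1,e_2\}=\emptyset$, which is absurd. Hence this $\cH$ is not an edge intersection hypergraph, establishing (iii).
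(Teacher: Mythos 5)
Your parts (i) and (iii) are exactly the paper's: the same augmentation $\cE'=\cE\cup\{V\}$ (and the paper's standing convention of ``no loops'' already guarantees every hyperedge of $\cH$ has at least two vertices, so your tacit assumption is covered), and the same minimal example $\cE=\{\{1,2,3\},\{2,3,4\}\}$ on four vertices, deduced from (ii). For (ii) you take a genuinely different and heavier route: a proof by contradiction over all pairwise intersections of $f_1,g_1,f_2,g_2$, whose decisive case distinction you only sketch. The sketch is completable --- if every cross-intersection lay in $\{e_1,e_2\}$, then feeding the resulting containments back does force $e_1\subseteq e_2$ or $e_2\subseteq e_1$ in every branch --- but one of your two advertised terminal conditions is not right as stated: from an edge $X\in\cE'$ with $e_1\cup e_2\subseteq X$ you cannot conclude that its intersection with another of the four edges contains both $e_1$ and $e_2$, since that other edge need not contain $e_1\cup e_2$; fortunately this branch is never needed, as all branches close via the containment contradictions. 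The paper avoids the case analysis entirely by a direct choice: pick $v_1\in e_1\setminus e_2$ and $v_2\in e_2\setminus e_1$; since $v_2\notin e_1=f_1\cap g_1$, some $e^1\in\{f_1,g_1\}$ misses $v_2$, and likewise some $e^2\in\{f_2,g_2\}$ misses $v_1$; then $e^1\ne e^2$ (because $v_1\in e^1\setminus e^2$), and $\tilde e:=e^1\cap e^2\supseteq e_1\cap e_2$ has cardinality at least $2$, hence $\tilde e\in\cE$, while $v_1\notin\tilde e$ and $v_2\notin\tilde e$ show $\tilde e\ne e_1,e_2$ (indeed neither $e_i$ is contained in $\tilde e$). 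So the conclusion you are after --- that some cross-intersection of the four lifted edges already works --- is true, and the paper's argument identifies the right pair in one stroke, with no enumeration of how many of $f_1,g_1,f_2,g_2$ are distinct; if you keep your route, you must actually write out the case distinction you deferred, since it is the substance of the proof.
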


\medskip
\begin{proof}
		\begin{enumerate}
		\item[(i)]  Choosing $\cE'=\cE \cup \{V\}$ we have $\cH=EI(\cH')$.
		\item[(ii)] There are vertices $v_1 \in e_1\setminus e_2$ and $v_2 \in e_2\setminus e_1$ and edges $e_1',e_1'',e_2',e_2'' \in \cE'$ with $e_1' \cap e_1''=e_1$  and $e_2' \cap e_2''=e_2$. Clearly
		$$\exists e^1 \in \{ e_1',e_1''\}:v_2\notin e^1 \wedge \exists e^2 \in \{e_2',e_2''\}:v_1 \notin e^2.$$
		W.l.o.g. let $e^1=e_1'$, $e^2=e_2'$. Then
		$$\tilde{e}:= e_1' \cap e_2' \supseteq (e_1' \cap e_1'') \cap (e_2' \cap e_2'')=e_1 \cap e_2.$$ Hence $\tilde{e} \in \cE$ and $e_1 \not\subseteq \tilde{e}$, $e_2 \not\subseteq \tilde{e}$.
		\item[(iii)] This follows from (ii); a minimal example is $\cH=(V,\cE)$ with $V=\{1,2,3,4\}$,\\ $\cE =\{ \{1,2,3\}, \{2,3,4\}\}$.\hqed
	\end{enumerate}
\end{proof}
\bigskip
Next we consider relations between edge intersection hypergraphs and several other classes of hypergraphs known from the literature:
The {\em competition hypergraph} $C \cH(D)=(V, \cE^C)$ of a digraph $D=(V,A)$ (see \cite{GST40}) has the edge set
$$ \cE^C=\{e \subseteq V \big| \, |e| \ge 2 \wedge \exists v \in V : e= \inN(v) \}. $$
The {\em double competition hypergraph} $DC \cH(D)=(V,\cE^{DC})$ of a digraph $D=(V,A)$ (see \cite{GST33}) has the edge set
$$\cE^{DC}
=\{e \subseteq V \big| \, |e| \ge 2 \wedge \exists v_1,v_2 \in V: e=\outN(v_1) \cap \inN(v_2)\}.$$
The {\em  niche hypergraph} $N\cH(D)=(V, \cE^N)$ of a digraph $D=(V,A)$ (see \cite{GST2016}) has the edge set
$$ 	\cE^N =\{e \subseteq V \big| \, |e| \ge 2 \wedge \exists v \in V: e=\inN(v) \lor e=\outN(v) \}. $$
Further, for technical reasons, we need the {\em common enemy hypergraph} $CE \cH(D)=(V, \cE^{CE})$ of a digraph $D=(V,A)$ with the edge set
$$ \cE^{CE}=\{e \subseteq V \big| \, |e| \ge 2 \wedge \exists v \in V : e= \outN(v) \}, $$ as well as the hypergraph  $\cH'(D)=(V, \cE')$  of a digraph $D=(V,A)$ with the edge set
$$ \cE'=\{e \subseteq V \big| \, |e| \ge 2 \wedge \exists v_1,v_2\in V : e=\inN(v_1)=\outN(v_2) \}.$$
The following theorem yields relations between these classes of hypergraphs.

\begin{theorem}
	Let $D=(V,A)$ be a digraph; then
	\begin{center}
		$EI(N\cH(D)) \cup \cH'(D) = DC\cH(D) \cup EI(C\cH(D)) \cup EI(CE\cH(D))$.
	\end{center}
\end{theorem}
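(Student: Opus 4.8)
The plan is to prove the set equality by double inclusion, working edge by edge and translating every membership condition into a statement about in- and out-neighborhoods in $D$. Throughout, I would fix notation: a set $e \subseteq V$ with $|e|\ge 2$ belongs to $\cE(N\cH(D))$ iff $e = \inN(v)$ or $e = \outN(v)$ for some $v$; it belongs to $\cE(C\cH(D))$ (resp. $\cE(CE\cH(D))$) iff $e = \inN(v)$ (resp. $e = \outN(v)$) for some $v$; and the $EI$-operator then picks out pairwise intersections of size at least $2$ of such neighborhood sets. The key observation driving everything is that for $\cF \in \{N\cH(D), C\cH(D), CE\cH(D)\}$, a set $f$ lies in $\cE(EI(\cF))$ iff $f = g_1 \cap g_2$ with $g_1 \neq g_2$ two distinct edges of $\cF$ and $|f|\ge 2$; so I would enumerate the possible "types" of the two intersecting neighborhoods.

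For the inclusion $\subseteq$, take $f$ in the left-hand side. If $f \in \cE(\cH'(D))$, then $f = \inN(v_1) = \outN(v_2)$ with $|f|\ge 2$; since $f = \inN(v_1) \cap \outN(v_2)$ this immediately gives $f \in \cE(DC\cH(D))$. If instead $f \in \cE(EI(N\cH(D)))$, then $f = g_1 \cap g_2$ for distinct edges $g_1, g_2$ of $N\cH(D)$, each of which is an in- or out-neighborhood. I would split into three cases according to whether both $g_i$ are in-neighborhoods (then $f \in \cE(EI(C\cH(D)))$), both are out-neighborhoods (then $f \in \cE(EI(CE\cH(D)))$), or one is $\inN(v_1)$ and the other $\outN(v_2)$ (then $f = \outN(v_2) \cap \inN(v_1) \in \cE(DC\cH(D))$). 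The only subtlety is the mixed case when it could happen that $\inN(v_1) = \outN(v_2)$ as sets but they are nevertheless "distinct edges" — one must check that distinctness of edges in the hypergraph $N\cH(D)$ means distinctness as subsets of $V$, so in the mixed case $\inN(v_1) \neq \outN(v_2)$ and the double-competition description still applies since $f$ is their intersection; and if $\inN(v_1) = \outN(v_2)$ we are instead in the $\cH'(D)$ situation, which is exactly why $\cH'(D)$ appears on the left.

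For the reverse inclusion $\supseteq$, take $f$ in the right-hand side. If $f \in \cE(EI(C\cH(D)))$ then $f = \inN(v_1)\cap\inN(v_2)$ with these two in-neighborhoods distinct as sets and $|f|\ge 2$; since in-neighborhoods are edges of $N\cH(D)$, we get $f \in \cE(EI(N\cH(D)))$. Symmetrically $\cE(EI(CE\cH(D))) \subseteq \cE(EI(N\cH(D)))$. Finally if $f \in \cE(DC\cH(D))$, then $f = \outN(v_1)\cap\inN(v_2)$ with $|f|\ge 2$; here I would distinguish two subcases. If $\outN(v_1) \neq \inN(v_2)$ as subsets of $V$, then they are two distinct edges of $N\cH(D)$ (both have cardinality $\ge |f| \ge 2$, so both are genuinely edges), whence $f = \outN(v_1)\cap\inN(v_2) \in \cE(EI(N\cH(D)))$. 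If $\outN(v_1) = \inN(v_2)$, then $f = \outN(v_1) = \inN(v_2)$ with $|f|\ge 2$, so $f \in \cE(\cH'(D))$. Either way $f$ lies in the left-hand side.

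The main obstacle — and really the only place requiring care — is the bookkeeping around the degenerate case $\inN(v) = \outN(v')$: this is precisely where a single vertex-set arises both as an in- and an out-neighborhood without giving two distinct edges to intersect, so it is not captured by $EI$ of anything but must still appear. That is the role of the extra term $\cH'(D)$, and the whole theorem is essentially the statement that this is the only discrepancy. I would therefore make sure the case analysis on both sides is exhaustive and that "distinct edges" is consistently read as "distinct as subsets of $V$", after which the argument is a routine, if slightly tedious, verification with no hidden difficulties.
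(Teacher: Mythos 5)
Your proposal is correct and matches the paper's argument in substance: both proofs enumerate the possible types (in/in, out/out, mixed) of the two intersecting neighborhoods and isolate the degenerate case $\inN(v_1)=\outN(v_2)$ as exactly the role of $\cH'(D)$, the paper merely packaging this as one chain of logical equivalences where you use a double inclusion. Your explicit check that $|\outN(v_1)|,|\inN(v_2)|\ge|f|\ge 2$ (so that these sets really are hyperedges of $N\cH(D)$) is a detail the paper leaves implicit, and is a welcome addition.
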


\begin{proof}
	All hypergraphs have the vertex set $V$. For simplifying the logical expressions below, in the following let the symbol $e$ always denote a subset $e \subseteq V$ of cardinality at least 2.
Then we obtain
\begin{align*}
	& \; e\in (\cE^N)^\ei \cup \cE' \label{eq:3.5.2} \\
	\Leftrightarrow &\Big[\exists e_1,e_2 \in \cE^C \cup \cE^{CE}: e_1 \ne e_2 \wedge e=e_1 \cap e_2 \Big] \lor e \in \cE'  \\
	\Leftrightarrow &\Big[\exists e_1,e_2 \subseteq V: e_1\ne e_2 \; \wedge \; e=e_1 \cap e_2  \;  \wedge \; 
\Big(\exists v_1,v_2 \in V :\big(e_1=\inN(v_1) \wedge e_2=\inN(v_2)\big) \lor \\ 	
	&\phantom{  \Big[\exists e_1,e_2 \subseteq V: e_1\ne e_2 \; \wedge \; e=e_1 \cap e_2  \;  \wedge \;\Big(\exists v_1,v_2 \in V :} \;\big(e_1=\outN(v_1) \wedge e_2=\outN(v_2)\big)  \lor \\
	&\phantom{  \Big[\exists e_1,e_2 \subseteq V: e_1\ne e_2 \; \wedge \; e=e_1 \cap e_2  \;  \wedge \;\Big(\exists v_1,v_2 \in V :} \;\big(e_1=\inN(v_1) \wedge e_2=\outN(v_2)\big) \phantom{ \vee }\Big)\Big] \;  \lor \,  \\
	&  \; \Big[ \exists v_1,v_2 \in V: e=\inN(v_1)=\outN(v_2)\Big]  \\
	\end{align*}	

\begin{align*}
		\Leftrightarrow &\exists v_1,v_2 \in V \; \exists e_1,e_2 \subseteq V: e_1 \ne e_2 \; \; \wedge
\; \Big[\Big(e=e_1 \cap e_2 \wedge \big[(\, e_1=\inN(v_1) \wedge e_2=\inN(v_2)) \; \lor  \\
    & \phantom{ \exists v_1,v_2 \in V \; \exists e_1,e_2 \subseteq V: e_1 \ne e_2 \; \; \wedge \; \Big[\Big(e=e_1 \cap e_2 \wedge \big[} (e_1=\outN(v_1) \wedge e_2=\outN(v_2)) \; \lor \\
    & \phantom{ \exists v_1,v_2 \in V \; \exists e_1,e_2 \subseteq V: e_1 \ne e_2 \; \; \wedge \; \Big[\Big(e=e_1 \cap e_2 \wedge \big[}  (e_1=\inN(v_1) \wedge e_2=\outN(v_2))\phantom{\; \; \; \; }\big]\Big) \; \lor \\
    & \phantom{   \exists v_1,v_2 \in V \; \exists e_1,e_2 \subseteq V: e_1 \ne e_2 \; \; \wedge \; \Big[ ,\Big(e=e_1 \cap e_2 \wedge \big[(\, e_1=\inN(v_1) \wedge e} e= \inN(v_1)=\outN(v_2)\Big] \\[1ex]
	\Leftrightarrow
    &\exists v_1,v_2 \in V \, \exists e_1,e_2 \subseteq V:e=e_1 \cap e_2 \; \wedge \\
   	&  \big[e_1=\inN(v_1) \ne \inN(v_2)=e_2 \, \lor  \, e_1=\outN(v_1) \ne \outN(v_2)=e_2 \,
\lor \, (e_1=\inN(v_1) \wedge e_2=\outN(v_2)) \big]\\[1.5ex]
	\Leftrightarrow
    &\exists v_1,v_2 \in V \, \exists e_1,e_2 \subseteq V:e=e_1 \cap e_2 \; \wedge \\
    & \phantom{\exists v_1,v_2 \in V \, \exists e_1, }\big[\big(e_1,e_2 \in \cE^C  \wedge e_1 \ne e_2\big)  \; \lor  \; \big(e_1,e_2 \in \cE^{CE} \wedge e_1 \ne e_2\big) \; \lor \; \big(e_1 \in \cE^C \wedge e_2 \in \cE^{CE}  \big) \big] \\[1.5ex]
 	\Leftrightarrow
    &\big(\exists e_1,e_2\in \cE^{C}: e_1 \ne e_2 \; \wedge \;   e=e_1 \cap e_2 \big) \; \lor \;(\exists e_1,e_2 \in \cE^{CE}: e_1 \ne e_2 \wedge e=e_1 \cap e_2 \big) \; \\
     & \phantom{\big(\exists e_1,e_2\in \cE^{C}: e_1 \ne e_2 \; \wedge \;   e=e_1 \cap e_2 \big) \; } \lor \; \big(\exists v_1,v_2 \in V: e=\inN(v_1)\cap \outN(v_2)\big) \\[1ex]
	\Leftrightarrow  & \, e \in (\cE^{C})^\ei \; \lor \;   e \in (\cE^{CE})^\ei \; \lor \; e \in \cE^{DC} \\[1ex]
	\Leftrightarrow & \, e \in (\cE^{C})^\ei \; \cup \;  (\cE^{CE})^\ei \; \cup \; \cE^{DC}.
	\end{align*} \hqed
	
\end{proof}
\bigskip
A hypergraph $\cH=(V,\cE)$ has the {\em Helly property} if $$ \forall \cE' \subseteq \cE: (\forall e_1, e_2 \in \cE': e_1 \cap e_2 \not= \emptyset) \rightarrow \bigcap\limits_{e'\in \cE'}e' \ne \emptyset$$ (see Berge \cite{GST5}); next we show that the Helly property is hereditary for edge intersection hypergraphs.

\begin{theorem}
	If $\cH=(V, \cE)$ has the Helly property then  $EI(\cH) = (V, \eiE )$ has this property, too.
\end{theorem}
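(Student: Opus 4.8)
The plan is to show directly that any subfamily of $\eiE$ which is pairwise intersecting has a common vertex, using the Helly property of the original hypergraph $\cH$. So let $\cE'' \subseteq \eiE$ be a family of hyperedges of $EI(\cH)$ with $f \cap f' \neq \emptyset$ for all $f, f' \in \cE''$. By definition of $EI(\cH)$, each $f \in \cE''$ is of the form $f = e_1^f \cap e_2^f$ for suitable $e_1^f, e_2^f \in \cE$ with $e_1^f \neq e_2^f$ and $|f| \geq 2$. The goal is to produce a single vertex lying in $\bigcap_{f \in \cE''} f$.

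First I would pass to the subfamily $\cE' := \bigcup_{f \in \cE''}\{e_1^f, e_2^f\} \subseteq \cE$ of all the "parent" edges. Since each $f = e_1^f \cap e_2^f \subseteq e_1^f$ and $f \subseteq e_2^f$, and since the $f$'s are pairwise intersecting, the family $\cE'$ is pairwise intersecting as well (any two members of $\cE'$ each contain some $f \in \cE''$, and those $f$'s meet). By the Helly property of $\cH$ applied to $\cE'$, there is a vertex $w \in \bigcap_{e' \in \cE'} e'$; in particular $w \in e_1^f \cap e_2^f = f$ for every $f \in \cE''$. Hence $w \in \bigcap_{f \in \cE''} f \neq \emptyset$, which is exactly the Helly condition for $EI(\cH)$.

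The one point that needs a little care — and is really the only obstacle — is the degenerate case $\cE'' = \emptyset$ or a one-element family, where the implication in the definition of the Helly property holds vacuously or trivially; this should just be noted. Also one must observe that $\cE'$ may be empty only when $\cE''$ is, so no division-by-zero type issue arises. Everything else is the straightforward monotonicity remark that intersection with a larger set preserves nonemptiness of pairwise intersections, so the bulk of the argument is the single application of Helly to the parent family $\cE'$.
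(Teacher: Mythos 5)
Your proof is correct and follows essentially the same route as the paper's: replace each hyperedge $f=e_1^f\cap e_2^f$ of $EI(\cH)$ by its two parent edges, observe that the resulting family in $\cE$ is still pairwise intersecting since each parent contains the corresponding $f$, and apply the Helly property of $\cH$ to obtain a common vertex that then lies in every $f$. No changes needed.
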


\begin{proof}
	Let $\eiE_S=\{e_1,...,e_t\} \subseteq \eiE$ with $t\ge 1$ and $e_i \cap e_j \ne \emptyset$ for $i,j \in \{1,...,t\}$. Clearly
	$$\forall i \in \{1,...,t\} \; \exists e_i',e_i'' \in \cE: e_i=e_i' \cap e_i'' \wedge e_i'\ne e_i''.$$
Let $\cE_S:=\{ e_1',...,e_t',e_1'',...,e_t''\}$. By $e_i \cap e_j \ne \emptyset$, for $i,j \in \{1,...,t\}$, we have $\bar{e} \cap \bar{\bar{e}} \ne \emptyset$ for arbitrary $\bar{e}, \bar{\bar{e}} \in \cE_S$ and the Helly property of $\cH$ yields
	$$\emptyset \not= \bigcap\limits_{\bar{e}\in \cE_S}\bar{e}=e_1' \cap...\cap e_t' \cap e_1''\cap...\cap e_t''=(e_1' \cap e_1'')\cap...\cap(e_t' \cap e_t'') = e_1 \cap...\cap e_t=\bigcap\limits_{e\in \eiE_S}e,$$ i.e. $EI(\cH)$ has the Helly property.\hqed
\end{proof}
\bigskip

From the definition of edge intersection hypergraphs it follows immediately that for $k \ge 1$
$$ \max\{ |e| \; \big| \; e \in \cE(EI^\title{k}(\cH))\} < \max\{ |e|  \; \big| \;  e \in \cE(EI^{k-1}(\cH)) \}. $$
Hence the $EI$-number $\kEI(\cH)$ is well defined. In the following we determine the edge intersection hypergraph and the $EI$-number $\kEI$ for some special classes of hypergraphs.
The strong {\em d-uniform hypercycle} $\hat{\cC}_{n}^d$ and the strong {\em d-uniform hyperpath} $\hat{\cP}_{n}^d$ both have the vertex set $\{v_1,...,v_n\}$ and the edge sets

\medskip
\qquad$ \cE(\hat{\cC}_n^d)=\{e_i=\{v_i,v_{i+1},...,v_{i+d-1} \}\big| i=1,...,n\} \quad \text{(indices taken modulo n)} $

\medskip
\qquad$ \cE(\hat{\cP}_n^d)=\{e_i=\{v_i,v_{i+1},...,v_{i+d-1}\} \big| i=1,...,n-d+1\}. $

\medskip
We consider only those strong d-uniform hypercycles $\hat{\cC}_{n}^d$ with $n \ge 2d-1$. This condition implies that for different edges $e_i,e_j \in \cE(\hat{\cC}_{n}^d)$ the intersection is empty or contains only  vertices being consecutive on the cycle, i.e. $e_i \cap e_j=\{v_s,v_{s+1},...,v_{s+t}\}$ for $s=1,...,n$ and $t=0,...,d-2$ (indices taken modulo n). For "small" cycles with $n<2d-1$ the edge intersection hypergraph is getting deep, because it contains edges of other types, too, and the following structural results, which are partly contained in the Bachelor Thesis \cite{ROB} of a student of the second author, are not true.

\begin{theorem}
	Let $\hat{\cC}_{n}^d$ and $\hat{\cP}_{n}^d$ be a strong d-uniform hypercycle and a strong d-uniform hyperpath, respectively.
	\begin{enumerate}
	 \item[(i)] $EI^k(\hat{\cC}_n^d)=\hat{\cC}^{d-k}_n \cup \hat{\cC}^{d-k-1}_n \cup ... \cup \hat{\cC}^2_n $ for $d \ge 3$, $n \ge 2d-1$ and $k=1,...,d-2.$
	 \item[(ii)]  $\kEI(\hat{\cC}_n^d)=d-1$ for $d \ge 2$ and $n \ge 2d-1$.
	 \item[(iii)] $\kEI(\hat{\cP}_n^d)=\begin{cases}
	 d-1 & \text{for } d \ge 2 \text{ and } n\ge 2d-1, \\
	 n-d+1 & \text{for }d \ge 2 \text{ and } n < 2d-1. \\
	 \end{cases}$
	\end{enumerate}
\end{theorem}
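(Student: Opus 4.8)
The plan is to prove all three parts by induction on the number $k$ of iterations of the $EI$-operator, using that the intersection of two sets of consecutive vertices is again such a set. Write $[a,b]:=\{v_a,v_{a+1},\dots,v_b\}$ (indices modulo $n$ in the cycle case). For the cycle I rely on the fact, recalled before the theorem, that for $n\ge 2d-1$ two distinct hyperedges of $\hat{\cC}_n^d$ meet in a set of at most $d-1$ consecutive vertices or not at all; for the path the analogous statement is trivial since there is no wrap-around.

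For (i), put $\cH_{\ell}:=\hat{\cC}_n^{\ell}\cup\hat{\cC}_n^{\ell-1}\cup\dots\cup\hat{\cC}_n^{2}$, the hypergraph on $V$ whose hyperedges are exactly all ``arcs'' (sets of consecutive vertices) of lengths $2,\dots,\ell$; the claim is $EI^{k}(\hat{\cC}_n^d)=\cH_{d-k}$. The case $k=1$ is direct: for $1\le s\le d-2$ and arbitrary $i$ one has $e_i\cap e_{i+s}=\{v_{i+s},\dots,v_{i+d-1}\}$, an arc of length $d-s$, and letting $i,s$ vary these are precisely all arcs of lengths $2,\dots,d-1$, while nothing longer and (by the recalled fact) no non-arc occurs; thus $EI(\hat{\cC}_n^d)=\cH_{d-1}$. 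For the inductive step I prove $EI(\cH_{\ell})=\cH_{\ell-1}$ for $3\le\ell\le d-1$: if $A\ne B$ are arcs of length $\le\ell$ then $|A|+|B|\le 2\ell\le 2d-2<n$, so $A\cap B$ is again an arc, necessarily of length $\le\ell-1$ (if one of $A,B$ contains the other the smaller has length $<\ell$; otherwise $A\cap B$ is a proper sub-arc of both); conversely every arc $C$ of length $m\in\{2,\dots,\ell-1\}$ equals $C\cap C'$ with $C'\supsetneq C$ obtained by appending a neighbouring vertex, so $C\in\cE(EI(\cH_{\ell}))$. Iterating the step gives $EI^{k}(\hat{\cC}_n^d)=\cH_{d-k}$ for $k=1,\dots,d-2$, which is (i). Part (ii) follows: for $d\ge3$, (i) gives $EI^{d-2}(\hat{\cC}_n^d)=\cH_2=\hat{\cC}_n^2=C_n$, with non-empty edge set, while $EI$ of a simple graph has empty edge set (distinct edges meet in at most one vertex); hence $\kEI=d-1$, and for $d=2$ the same remark applied to $\hat{\cC}_n^2=C_n$ yields $\kEI=1=d-1$.

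For (iii) (assume $n\ge d$, else $\hat{\cP}_n^d$ has no hyperedge) I would prove by induction on $k\ge1$ that
$$\cE(EI^{k}(\hat{\cP}_n^d))=S_k:=\bigl\{\,[a,b]\ \big|\ k+1\le a\le n-d+1,\ d\le b\le n-k,\ a<b,\ b-a\le d-1-k\,\bigr\}.$$
The case $k=1$ follows from $e_i\cap e_j=[j,\,i+d-1]$ for $i<j$ (size $\ge2$ iff $j-i\le d-2$), after substituting $a=j$, $b=i+d-1$. For the step: $S_{k+1}\subseteq\cE(EI(S_k))$ because $[c,e]=[c-1,e]\cap[c,e]$ with both factors in $S_k$; and $\cE(EI(S_k))\subseteq S_{k+1}$ because, for distinct $I,J\in S_k$, the interval $I\cap J$ is either the smaller of two nested ones — then its size is $\le(d-k)-1$, and since the larger one overhangs it at one end its endpoints satisfy the stronger bounds $a\ge k+2$, $b\le n-1-k$ — or a proper subinterval of two partially overlapping ones, whose two exposed endpoints supply these bounds directly. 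Hence $S_k\ne\emptyset$ exactly when $k\le n-d$ and $k\le d-2$, so $\kEI(\hat{\cP}_n^d)=\min\{\,n-d+1,\ d-1\,\}$, which equals $d-1$ for $n\ge 2d-1$ and $n-d+1$ for $n<2d-1$ — the two cases of the statement.

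The cycle part is essentially bookkeeping once the estimate $|A|+|B|<n$, which forbids two-component intersections, is secured. The one genuinely delicate point is the inductive step for the hyperpath: because $\hat{\cP}_n^d$ has two ends, $S_k$ carries simultaneously a lower bound $k+1\le a$ and an upper bound $b\le n-k$, and one must check carefully that a nested intersection $I\cap J\in\{I,J\}$ can never reach either end — this is the origin of the inequalities in the displayed formula and the only place where a short case distinction on how $I$ and $J$ overlap is required.
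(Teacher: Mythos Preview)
Your proof is correct. For parts (i) and (ii) it follows the paper's approach: compute $EI(\hat{\cC}_n^d)$ directly from the pairwise intersections $e_i\cap e_{i+j}$ and then iterate; you are simply more explicit than the paper about the inductive step $EI(\cH_\ell)=\cH_{\ell-1}$, in particular invoking $|A|+|B|\le 2\ell<n$ to rule out non-arc intersections.

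For part (iii) your route is genuinely different and tidier. The paper splits into the two regimes $n\ge 2d-1$ and $n<2d-1$, writes out $\cE(EI(\hat{\cP}_n^d))$ in each case as a union of truncated hyperpath edge sets, and then argues somewhat informally that each further application of $EI$ strips off the top cardinality layer while keeping the rest. You instead prove a single closed formula $\cE(EI^k(\hat{\cP}_n^d))=S_k$ valid uniformly in $k$, and read off $\kEI=\min\{d-1,\,n-d+1\}$ from the non-emptiness condition $k\le d-2$ and $k\le n-d$. This unifies the two cases of the statement and makes the endpoint bookkeeping that the paper handles by explicit enumeration into four clean inequalities.

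One point deserves a sentence more than you give it. In the nested sub-case of the inclusion $\cE(EI(S_k))\subseteq S_{k+1}$, if the larger interval overhangs the smaller one $I=[a,b]$ only at the right end, the overhang yields $b\le n-k-1$ but not directly $a\ge k+2$. What actually rescues the argument is that $[k+1,d]$ is the \emph{unique} member of $S_k$ with left endpoint $k+1$ (since $a=k+1$ forces $d\le b\le a+d-1-k=d$), so two distinct nested intervals of $S_k$ cannot share that extremal left endpoint; symmetrically $[n-d+1,n-k]$ is the unique member with right endpoint $n-k$. You correctly flag this as the delicate spot in your final paragraph, but the reason it works should be stated explicitly rather than left implicit.
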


\begin{proof}
	\begin{enumerate}
		\item[(i)] In strong d-uniform hypercycles $\hat{\cC}_{n}^d$ with $n \ge 2d-1$ there are intersections of cardinalities at least two between the edges $e_i,e_{i+1},...,e_{i+d-2}$; $i=1,...,n$ (indices taken modulo n). Hence $EI(\hat{\cC}_{n}^d)$ contains the following edges
(see Figure~\ref{abb:hypercycle}):
		$$ e_{j,i}:= e_i \cap e_{i+j}=\{v_{i+j},...,v_{i+d-1}\},$$
with $ i=1,...,n$ and $ j=1,...,d-2$ (indices taken modulo n).
		This yields $EI(\hat{\cC}_n^d)=\hat{\cC}^{d-1}_n \cup \hat{\cC}^{d-2}_n \cup ... \cup \hat{\cC}^2_n$, i.e. by using the $EI$-operator the maximum edge cardinality decreases by one. For the $k$-th iteration we obtain
		$$EI^k(\hat{\cC}_n^d)=\hat{\cC}^{d-k}_n \cup... \cup \hat{\cC}^2_n \, , \; \; k=1,...,d-2.$$
			\begin{figure}[h]
				\makeatletter
				\renewcommand{\@thesubfigure}{}
				\makeatother
			\centering
			\subfigure[$\hat{\cC}_{10}^4$ ]{\includegraphics[width=5.8cm]{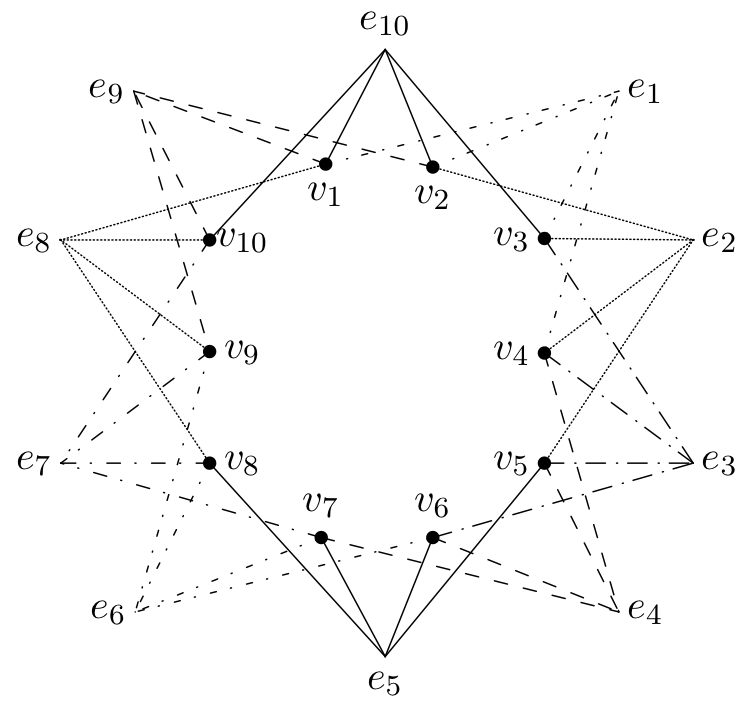}}
			\hfill
			\subfigure[$EI^1(\hat{\cC}_{10}^4)=\hat{\cC}^{3}_{10} \cup \hat{\cC}^{2}_{10} $ ]{\includegraphics[width=5.5cm]{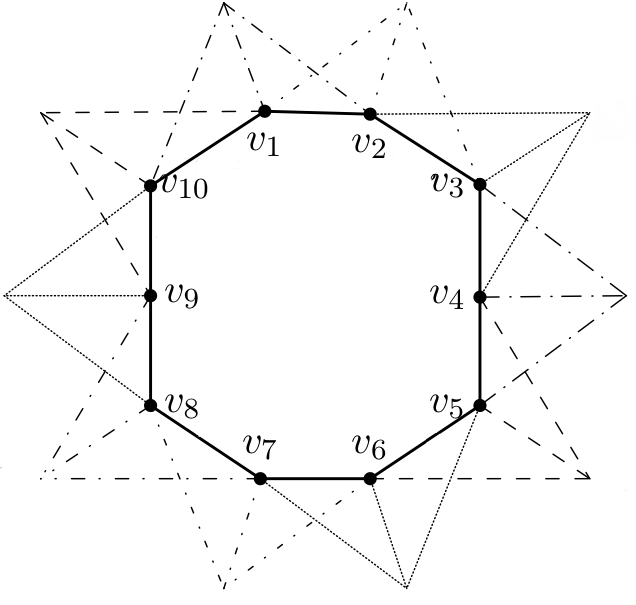}}
			\hfill
			\subfigure[$EI^2(\hat{\cC}_{10}^4)=\hat{\cC}^{2}_{10} $]{\includegraphics[width=3.3cm]{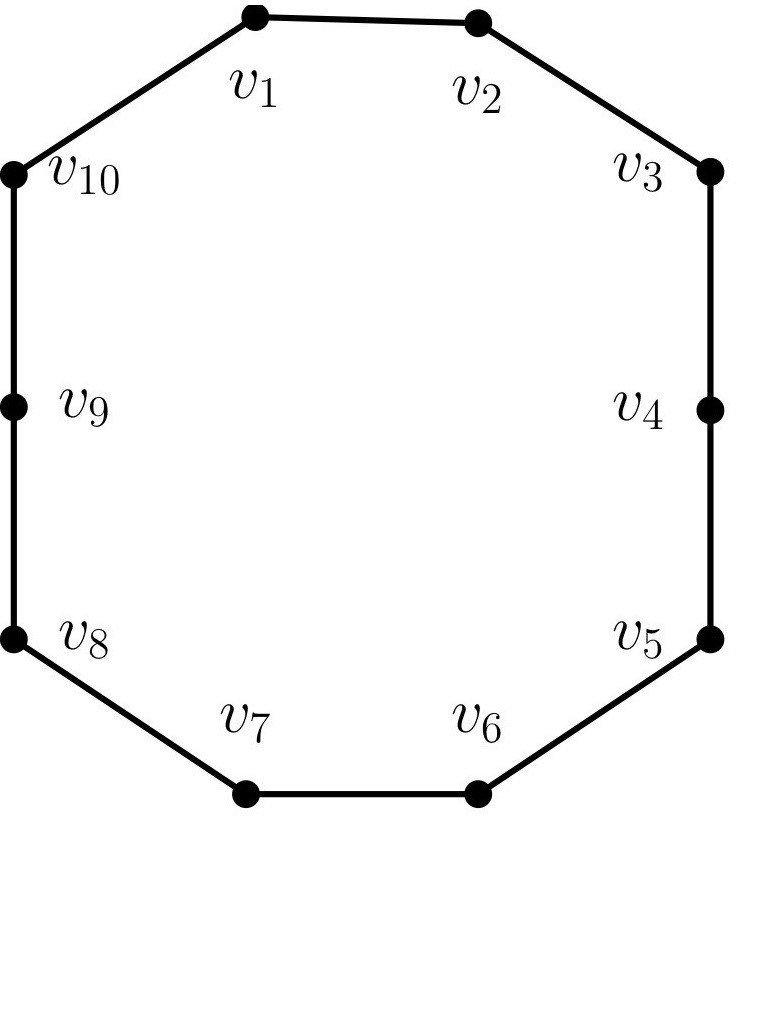}}
			\caption{The strong 4-uniform hypercycle $\hat{\cC}_{10}^4$ and the corresponding edge intersection hypergraphs.}
			\label{abb:hypercycle}
		\end{figure}

\vspace{-7mm}
		\item[(ii)] The case $d=2$ is trivial; for $d \ge 3$ it follows with (i) that $EI^{d-2}(\hat{\cC}_n^d)=\hat{\cC}_n^2=C_n$ and hence $\kEI(\hat{\cC}_n^d)=d-1$.
		\item[(iii)] The result is trivial for $d=2$ in both cases; in the following we assume $d \ge 3$.\\
\end{enumerate}
\vspace{-5mm}
For $n \ge 2d-1$ we have $|e_1 \cap e_{n-d+1}| \le 1$, i.e. the intersection of the first edge and of the last edge of $\hat{\cP}_{n}^d$ does not generate an edge in $EI(\hat{\cP}_{n}^d)$. The edges of $EI(\hat{\cP}_{n}^d)$ are generated by the following intersections
(see Figure~\ref{abb:hyperpath}):
		\begin{align*}
		e_{1,i}:= e_i \cap e_{i+1} &=\{v_i,v_{i+1},...,v_{i+d-1}\} \cap \{v_{i+1},v_{i+2},...,v_{i+d}\}\\ &= \{v_{i+1},...,v_{i+d-1}\} \qquad \qquad \qquad  \; \, \text{for} \quad i=1,...,n-d, \\
		e_{2,i}:= e_i \cap e_{i+2} &=\{v_i,v_{i+1},...,v_{i+d-1}\} \cap \{v_{i+2},v_{i+3},...,v_{i+d+1}\} \\&= \{v_{i+2},...,v_{i+d-1}\}  \qquad \qquad \qquad  \; \; \text{for} \quad i=1,...,n-d-1,\\
		& \vdots \\
		e_{d-2,i}:= e_i \cap e_{i+d-2} &=\{v_i,v_{i+1},...,v_{i+d-1}\} \cap \{v_{i+d-2},v_{i+d-1},...,v_{i+d+1}\} \\&= \{v_{i+d-2},v_{i+d-1}\} \qquad \qquad \qquad  \; \; \; \text{for} \quad i=1,...,n-2d+3.\\
		\end{align*}
		Hence $EI(\hat{\cP}_{n}^d)$ has edges of cardinalities  $d-1,d-2,..,2$ and the edge set		 \begin{align*}
		\cE(EI(\hat{\cP}_n^d))
		=& \; \{e_{1,1},...,e_{1,n-d},e_{2,i},...,e_{2,n-d-1},e_{d-2,1},...,e_{d-2,n-2d+3}\}\\
		=& \; \cE(\hat{\cP}_n^{d-1})  \setminus  \{ \{ v_1,...,v_{d-1}\},\{ v_{n-d+2},...,v_{n}\} \} \quad  \, \cup \\
		 &\; \cE(\hat{\cP}_n^{d-2})  \setminus  \{ \{ v_1,...,v_{d-2}\},\{ v_2,...,v_{d-1}\},\{ v_{n-d+2},...,v_{n-1}\}, \\& \phantom{\cE(\hat{\cP}_n^{d-2})  \setminus  \{}
		 \qquad \qquad \qquad \qquad \qquad \qquad \quad \; \{ v_{n-d+3},...,v_{n}\} \}   \cup \ldots \cup \, \\
		 & \cE(\hat{\cP}_n^2)  \setminus  \{ \{ v_1,v_{2}\},...,\{ v_{d-2},v_{d-1}\},\{ v_{n-d+2},v_{n-d+3}\},...,\{ v_{n-1},v_{n}\} \}.
\end{align*}
		
			\begin{figure}[h!]
				\makeatletter
				\renewcommand{\@thesubfigure}{}
				\makeatother
			\centering
			\subfigure[$\hat{\cP}_{10}^4 $ ]{\includegraphics[width=9.35cm]{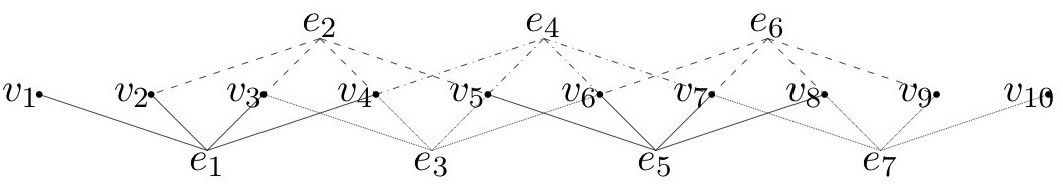}} 
			\vfill
			\subfigure[$EI^1(\hat{\cP}_{10}^4) $]{\includegraphics[width=9.35cm]{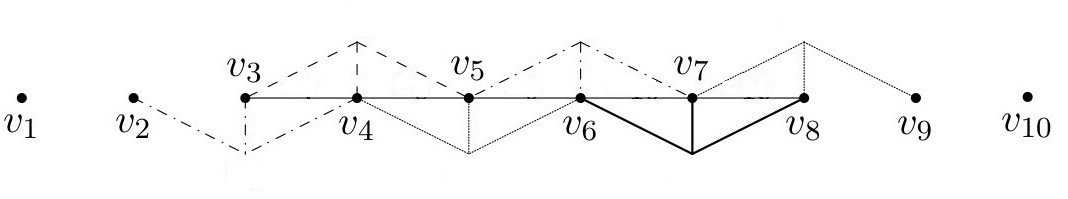}} 
			\vfill
			\subfigure[$EI^2(\hat{\cP}_{10}^4)$ ]{\includegraphics[width=9.35cm]{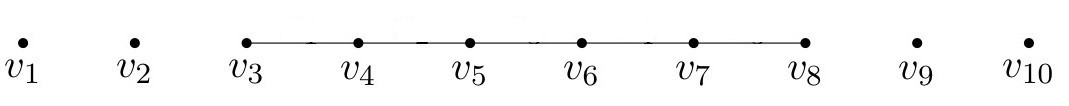}} 
			\vfill
			\subfigure[$EI^3(\hat{\cP}_{10}^4)$ ]{\includegraphics[width=9.35cm]{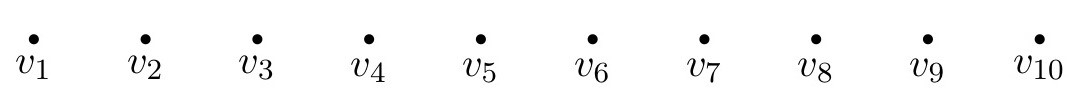}}
			\caption{The strong 4-uniform hyperpath $\hat{\cP}_{10}^4$ and the corresponding edge intersection hypergraphs.}
			\label{abb:hyperpath}
		\end{figure}
	
		The reapplication of the $EI$-operator yields a hypergraph without the edges of maximum cardinality $(d-1)$, while all other edges of $EI(\hat{\cP}_n^d)$ remain (because they are contained in the edges of cardinality $(d-1)$). After $(d-2)$ iterations we obtain $EI^{d-2}(\hat{\cP}_n^d)=P_{n-2d+4}\cup I_{2d-4}$, where $I_t$ denotes a set of $t$ isolated vertices; hence $\kEI(\hat{\cP}_n^d)=d-1$.\\
		
		For $n<2d-1$ we have $|e_1 \cap e_{n-d+1}| \ge 2$, i.e. the intersection of the first and the last edge of $\hat{\cP}_n^d$ generates in $EI(\hat{\cP_n^d})$ the edge of minimum cardinality $(2d-n)$. All edges of $EI(\hat{\cP_n^d})$ are generated by the following intersections
(see Figure~\ref{abb:hyperpath7_5}):
			\begin{align*}
		e_{1,i}:= e_i \cap e_{i+1} &=\{v_i,v_{i+1},...,v_{i+d-1}\} \cap \{v_{i+1},v_{i+2},...,v_{i+d}\}\\ &= \{v_{i+1},...,v_{i+d-1}\} \qquad\qquad \qquad \qquad  \text{for} \quad i=1,...,n-d, \\
		e_{2,i}:= e_i \cap e_{i+2} &=\{v_i,v_{i+1},...,v_{i+d-1}\} \cap \{v_{i+2},v_{i+3},...,v_{i+d+1}\} \\&= \{v_{i+2},...,v_{i+d-1}\}  \qquad \qquad \qquad \qquad  \text{for} \quad i=1,...,n-d-1,\\
		& \vdots \\
		e_{n-d-1,i}:= e_i \cap e_{i+n-d-1} &=\{v_i,v_{i+1},...,v_{i+d-1}\} \cap \{v_{i+n-d-1},...,v_{i+n-2}\} \\&= \{v_{i+n-d-1},...,v_{i+d-1}\} \qquad \qquad \qquad  \text{for} \quad i=1,2, \\
		e_{n-d,1}:= e_1 \cap e_{n-d+1} &=\{v_1,v_{2},...,v_{d}\} \cap \{v_{n-d+1},...,v_{n}\} \\&= \{v_{n-d+1},...,v_{d}\}. \\
		\end{align*}
		Hence $EI(\hat{\cP}_{n}^d)$ has edges of cardinalities  $d-1,d-2,..,2d-n$ and the edge set
		\begin{align*}
		\cE(EI(\hat{\cP}_n^d))
		=&\; \{e_{1,1},...,e_{1,n-d},e_{2,1},...,e_{2,n-d-1},e_{n-d-1,1},e_{n-d-1,2},e_{n-d,1}\}\\
		=&\; \cE(\hat{\cP}_n^{d-1})  \setminus  \{ \{ v_1,...,v_{d-1}\},\{ v_{n-d+2},...,v_{n}\} \} \quad \,  \cup \\
&\; \cE(\hat{\cP}_n^{d-2})  \setminus  \{ \{ v_1,...,v_{d-2}\},\{ v_2,...,v_{d-1}\},\{ v_{n-d+2},...,v_{n-1}\},\\
& \qquad \qquad \qquad \qquad \qquad \qquad \qquad \qquad \qquad  \quad \{ v_{n-d+3},...,v_{n}\} \}  \; \cup \ldots \cup \\
&\; \cE(\hat{\cP}_n^{2d-n})  \setminus  \{ \{v_1,...,v_{2d-n}\},...,\{ v_{n-d},...,v_{d-1}\},\\
& \qquad \qquad \quad \; \; \; \{ v_{n-d+2},...,v_{d+1}\},...,\{ v_{2(n-d)+1},...,v_{n}\} \}.
		\end{align*}
			\begin{figure}[!h]
			\centering
				\makeatletter
			\renewcommand{\@thesubfigure}{}
			\makeatother
			 \subfigure[$\hat{\cP}_{7}^5$]{\includegraphics[width=7.5cm]{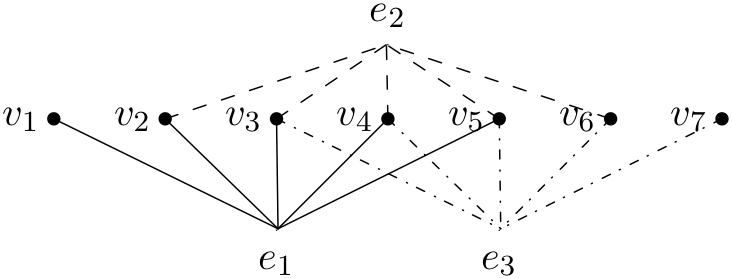}}
			\hfill
			\subfigure[$EI^1(\hat{\cP}_{7}^5) $]{\includegraphics[width=7.5cm]{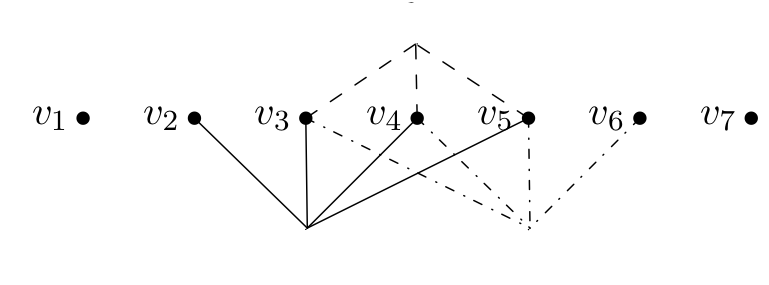}}
			\vfill
			\subfigure[$EI^2(\hat{\cP}_{7}^5) $]{\includegraphics[width=7.5cm]{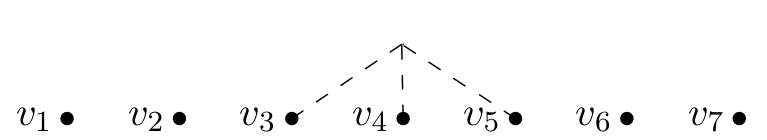}}
			\hfill
			\subfigure[$EI^3(\hat{\cP}_{7}^5) $]{\includegraphics[width=7.5cm]{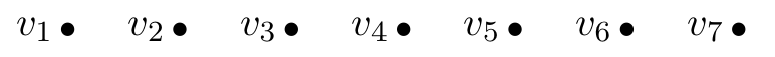}}
			\caption{The strong 5-uniform hyperpath $\hat{\cP}_7^5$ and the corresponding edge intersection hypergraphs. }
			\label{abb:hyperpath7_5}
		\end{figure}
	
Again, the reapplication of the $EI$-operator yields a hypergraph without the edges of maximum cardinality $(d-1)$, while all the other edges remain. After $(n-d)$ iterations we obtain $EI^{n-d}(\hat{\cP}_n^d)=\tilde{e} \cup I_{2(n-d)}$, where $\tilde{e}=\{v_{n-d+1},...,v_d\}$ with cardinality $|\tilde{e}|=2d-n$, hence $\kEI(\hat{\cP}_n^d)=n-d+1$.\hqed
\end{proof}
\smallskip

For $n \ge 5$, $d=3$ and $k=1$, Theorem 4(i) provides the following.

\begin{corollary}
For $n \ge 5$ the cycle $C_n$ is an edge intersection hypergraph of a 3-uniform \\[0.3ex] hypergraph, namely $ C_n = EI(\hat{\cC}_n^3)$.
\end{corollary}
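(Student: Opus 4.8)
The plan is to apply Theorem~4(i) directly with the parameters $d=3$ and $k=1$. First I would observe that a cycle $C_n$ on $n$ vertices is exactly the $2$-uniform strong hypercycle $\hat{\cC}_n^2$: by definition its edge set is $\{\{v_i,v_{i+1}\}\mid i=1,\dots,n\}$ (indices modulo $n$), which is precisely the cycle on $v_1,\dots,v_n$. So the claim ``$C_n$ is an edge intersection hypergraph of a $3$-uniform hypergraph'' amounts to exhibiting a $3$-uniform hypergraph $\cH$ with $EI(\cH)=\hat{\cC}_n^2$.

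Next I would verify the hypothesis of Theorem~4(i) needed for the case $d=3$, $k=1$: the theorem requires $d\ge 3$ (here $d=3$, fine), $n\ge 2d-1=5$ (this is exactly the hypothesis $n\ge 5$ in the corollary), and $k\in\{1,\dots,d-2\}=\{1\}$, so $k=1$ is the only admissible value and it is the one we use. Under these conditions Theorem~4(i) gives
\[
EI^1(\hat{\cC}_n^3)=\hat{\cC}_n^{3-1}\cup\hat{\cC}_n^{3-1-1}\cup\cdots\cup\hat{\cC}_n^2=\hat{\cC}_n^2,
\]
since the chain of unions from superscript $d-k=2$ down to $2$ consists of the single term $\hat{\cC}_n^2$. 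Because $EI^1=EI$ by the definition of the iterated operator ($EI^0(\cH)=\cH$, $EI^k(\cH)=EI(EI^{k-1}(\cH))$), this reads $EI(\hat{\cC}_n^3)=\hat{\cC}_n^2=C_n$.

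Finally I would note that $\hat{\cC}_n^3$ is $3$-uniform by construction (each edge $\{v_i,v_{i+1},v_{i+2}\}$ has cardinality $3$, using $n\ge5>3$ so these triples are genuinely $3$-element sets and the hypergraph is well-defined), which supplies the ``$3$-uniform'' part of the conclusion. Putting the two facts together — $\hat{\cC}_n^3$ is $3$-uniform and $EI(\hat{\cC}_n^3)=C_n$ — proves the corollary with the explicit witness $\hat{\cC}_n^3$. There is essentially no obstacle here: the statement is a specialization of the already-proved Theorem~4(i), and the only things to check are the bookkeeping identifications $C_n=\hat{\cC}_n^2$ and that the one-term union $\hat{\cC}_n^{d-k}\cup\cdots\cup\hat{\cC}_n^2$ collapses when $d-k=2$. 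If anything deserves a sentence of care, it is confirming that the hypothesis $n\ge 2d-1$ of Theorem~4 is met, i.e. that $n\ge5$ is exactly what makes the structural description of $EI(\hat{\cC}_n^3)$ valid.
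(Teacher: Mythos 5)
Your proposal is correct and matches the paper exactly: the corollary is stated there as the immediate specialization of Theorem~4(i) to $d=3$, $k=1$, $n\ge 2d-1=5$, with the union collapsing to the single term $\hat{\cC}_n^2=C_n$. Your additional bookkeeping (identifying $C_n$ with $\hat{\cC}_n^2$ and confirming the hypothesis $n\ge 5$) is exactly the verification the paper leaves implicit.
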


Berge \cite{GST5} generalized  the complete graph $K_n$ by the definition of the {\em complete d-uniform hypergraph} $\cK_n^d$ as follows:
$$ V(\cK_n^d)=\{v_1,...,v_n\}\, , \; \; \cE(\cK_n^d)=\{T\subseteq V(\cK_n^d) \; \big| \; |T|=d \}.$$

\begin{theorem}
	Let $\cK_n^d$ be a complete d-uniform hypergraph with $n-1 \ge d \ge 3$.
	\begin{enumerate}
		\item[(i)] $EI^k(\cK_n^{d}) =\cK_n^{d-k} \cup \cK_n^{d-k-1} \cup ...\cup \cK_n^{t_k}$ for $1 \le k \le d-2$,\\[0.5ex]
		where $t_k:=\max\{2, 2^k(d-n)+n\}$ for $1 \le k \le d-2.$
		\item[(ii)] $\kEI(\cK_n^d)=d-1$ for $d \ge 2$.
	\end{enumerate}
\end{theorem}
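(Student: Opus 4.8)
The plan is to analyse directly which sets of vertices arise as intersections of two distinct $d$-element subsets of $V=\{v_1,\dots,v_n\}$. If $T_1,T_2\in\cE(\cK_n^d)$ with $T_1\neq T_2$, then $|T_1\cap T_2|$ can be any value $s$ with $\max\{0,2d-n\}\le s\le d-1$, and conversely every set $T\subseteq V$ with $|T|=s$ in this range and $s\ge 2$ is realized as such an intersection (choose $T_1\supseteq T$ arbitrarily of size $d$, then $T_2$ of size $d$ with $T_1\cap T_2=T$, which is possible precisely when $2d-s\le n$). Hence $\cE(EI(\cK_n^d))=\{T\subseteq V\mid t_1\le |T|\le d-1\}$ where $t_1=\max\{2,2d-n\}=\max\{2,2^1(d-n)+n\}$, so $EI(\cK_n^d)=\cK_n^{d-1}\cup\cK_n^{d-2}\cup\dots\cup\cK_n^{t_1}$. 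This establishes (i) for $k=1$.

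For the induction step for (i), suppose $EI^k(\cK_n^d)=\cK_n^{d-k}\cup\dots\cup\cK_n^{t_k}$. Applying the $EI$-operator once more, the key observation (already used for hyperpaths and hypercycles) is that edges of non-maximal cardinality in this union are contained in edges of the next larger cardinality, so only the intersections coming from pairs of edges of the top layer $\cK_n^{d-k}$ can produce \emph{new} large edges, and all edges of cardinality $\le d-k-1$ that already appear simply reappear. Applying the $k=1$ analysis to the top layer $\cK_n^{d-k}$ (a complete $(d-k)$-uniform hypergraph) shows its $EI$-image is $\cK_n^{d-k-1}\cup\dots\cup\cK_n^{\tau}$ with $\tau=\max\{2,2(d-k)-n\}=\max\{2,2^{k+1}(d-n)+n\}=t_{k+1}$; since $t_{k+1}\le t_k$, the union with the surviving lower layers gives $EI^{k+1}(\cK_n^d)=\cK_n^{d-k-1}\cup\dots\cup\cK_n^{t_{k+1}}$, closing the induction. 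Care is needed at the boundary: one must check the induction runs only while $d-k-1\ge 2$, i.e.\ $k\le d-3$, and separately confirm that at $k=d-2$ the top layer $\cK_n^2=K_n$ contributes nothing new (the intersection of two distinct edges of a complete graph has cardinality $\le 1$), so $EI^{d-2}(\cK_n^d)=\cK_n^2$ with $t_{d-2}=2$ as claimed.

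For (ii), note $d=2$ is trivial since $\cK_n^2=K_n$ and the intersection of two distinct edges of a simple graph has at most one vertex, so $\cE(EI(K_n))=\emptyset$ and $\kEI=1=d-1$. For $d\ge 3$, part (i) with $k=d-2$ gives $EI^{d-2}(\cK_n^d)=\cK_n^2=K_n\ne(V,\emptyset)$ (as $n\ge d+1\ge 4$), while $EI^{d-1}(\cK_n^d)=EI(K_n)=(V,\emptyset)$; hence $\kEI(\cK_n^d)=d-1$.

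\textbf{Main obstacle.} The routine combinatorics (which intersection sizes are achievable, and the exact formula $t_k=\max\{2,2^k(d-n)+n\}$) is straightforward; the delicate point is the induction step for (i) — precisely the argument that passing to $EI$ of a \emph{union} of nested complete uniform hypergraphs contributes no large edges beyond those obtained from the top layer alone, together with tracking the threshold $t_k$ across the $\max$ with $2$ so that the recursion $t_{k+1}=\max\{2,2t_k-n\}$ correctly collapses to the closed form, and handling the endpoint $k=d-2$ where the top layer is just $K_n$.
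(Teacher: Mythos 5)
Your base case and part (ii) are fine, and your overall strategy (determine exactly which intersection sizes are achievable, then induct) is the same as the paper's. But the induction step contains a genuine error: you locate the new minimum edge cardinality of $EI^{k+1}(\cK_n^d)$ in the \emph{top} layer $\cK_n^{d-k}$, whereas it actually comes from the \emph{bottom} layer $\cK_n^{t_k}$. Two distinct edges of cardinality $a$ in an $n$-set can intersect in as few as $\max\{0,2a-n\}$ vertices, and this is minimized by taking $a$ as small as possible; so the smallest new edges of $EI^{k+1}$ arise from pairs of edges of $\cK_n^{t_k}$ and have cardinality $\max\{2,\,2t_k-n\}$, which is exactly how the paper obtains $t_{k+1}=\max\{2,\,2(2^k(d-n)+n)-n\}=\max\{2,\,2^{k+1}(d-n)+n\}$. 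Your chain of equalities $\max\{2,2(d-k)-n\}=\max\{2,2^{k+1}(d-n)+n\}$ is false in general: for $n=10$, $d=9$, $k=2$ one has $EI^2(\cK_{10}^9)=\cK_{10}^7\cup\cK_{10}^6$, your top-layer computation gives the threshold $\max\{2,2\cdot 7-10\}=4$, hence $EI^3(\cK_{10}^9)=\cK_{10}^6\cup\cK_{10}^5\cup\cK_{10}^4$, while the correct answer (two $6$-sets can meet in just $2$ vertices) is $\cK_{10}^6\cup\cK_{10}^5\cup\cK_{10}^4\cup\cK_{10}^3\cup\cK_{10}^2$ with $t_3=\max\{2,10-2^3\}=2$. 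So your claim that ``only the top layer can produce new edges'' is exactly backwards for the small edges.

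Interestingly, in your closing paragraph you state the correct recursion $t_{k+1}=\max\{2,2t_k-n\}$, but the argument in the body derives $\max\{2,2(d-k)-n\}$ instead, which does not satisfy that recursion. The fix is straightforward: in the induction step, note that (a) every set $T$ with $t_k\le |T|\le d-k-1$ reappears as $T=T\cap(T\cup\{x\})$, both factors being edges of $EI^k(\cK_n^d)$; and (b) for $2\le s<t_k$, a set $T$ of cardinality $s$ is an intersection of two edges of $EI^k(\cK_n^d)$ if and only if one can place two supersets of $T$ of cardinality at least $t_k$ meeting exactly in $T$, which is possible precisely when $2t_k-s\le n$, i.e. $s\ge 2t_k-n$. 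This yields $t_{k+1}=\max\{2,2t_k-n\}$ and the closed form follows.
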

	\begin{proof}
		\begin{enumerate}
			\item[(i)]  The intersections (with cardinality of a least two) of edges in $\cK_n^d$ are all subsets $T \subseteq V(\cK)_n^d$  with cardinalities in the range $d-1 \ge |T| \ge t_1 = \max\{2,2d-n\}$; hence
			$$EI(\cK_n^{d})=\cK_n^{d-1} \cup \cK_n^{d-2} \cup ...\cup \cK_n^{t_1}.$$
			Using induction, the reapplication of the $EI$-operator to $EI^k(\cK_n^d)$ yields all subsets $T \subseteq V(\cK_n^d)$ of cardinalities in the range $$d-(k+1)\ge|T| \ge \max\{2,2(2^k(d-n)+n)-n\}=\max\{2, 2^{k+1}(d-n)+n\}. $$
			\item[(ii)] From (i) we know that $EI^{d-2}(\cK_n^d)=\cK_n^2=K_n$, hence $\kEI(\cK_n^d)=d-1$.\hqed
		\end{enumerate}
	\end{proof}

\section{Trees as edge intersection hypergraphs}

In the following, for the trees up to 8 vertices we often use the notations $T1, T2, \ldots, T48$ corresponding to \cite{RW}.
Moreover, for shortness we will conveniently write $ij$ instead of $\{i, j \}$ and $ijk$ instead of $\{ i,j,k \}$ for edges and hyperedges, respectively.

The main result of the section is that all but seven exceptional trees are edge intersection hypergraphs of 3-uniform hypergraphs. The exceptional trees have at most 6 vertices.

\begin{theorem}
All trees but $T2=P_2, T3=P_3, T5=P_4, T8=P_5, T14=P_6, T7$ and $T12$ are edge intersection hypergraphs of a 3-uniform hypergraph $\cH$.
\end{theorem}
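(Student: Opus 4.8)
The plan is to use induction on the number of vertices of the tree $T$, but a naive ``add one leaf'' induction will not work because the $EI$-construction is not well-behaved under attaching a single pendant vertex; Theorem~\ref{STA}(ii) shows that edges of $EI(\cH)$ cannot be too ``isolated''. Instead I would set up a stronger induction hypothesis: not merely that $T$ is an edge intersection hypergraph of some $3$-uniform $\cH$, but that there is such an $\cH$ in which some specified vertex (or edge) of $T$ sits in a prescribed ``reserve'' configuration of hyperedges — enough structure so that one can splice on a bounded-size gadget realizing a larger tree. Concretely, for a leaf edge $uv$ of $T$ with $v$ the leaf, the hyperedges $e_1,e_2\in\cE(\cH)$ with $e_1\cap e_2=\{u,v\}$ each contain a third vertex; by keeping track of where those third vertices go, and allowing oneself to re-root at an internal vertex of high degree, one gets enough freedom to handle the inductive step.

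The steps, in order, would be: (1) Dispose of the small cases directly — check by hand all trees on at most $7$ or $8$ vertices (using the enumeration $T1,\dots,T48$ from~\cite{RW}), exhibiting an explicit $3$-uniform $\cH$ for each non-exceptional one and proving non-realizability for $P_2,P_3,P_4,P_5,P_6,T7,T12$. Non-realizability here should follow from Theorem~\ref{STA}(ii)/(iii) together with a short parity/counting argument: a realizing $3$-uniform $\cH$ must have, for every edge $ab$ of the tree, at least two hyperedges through $\{a,b\}$, and by Theorem~\ref{STA}(ii) every tree edge that is a ``proper'' intersection forces a third hyperedge containing it, which for very small trees over-constrains the vertex set. (2) State the strengthened induction hypothesis precisely, naming the privileged leaf/edge and the shape of its hyperedge neighborhood in $\cH$. (3) For the inductive step, take a tree $T$ on $n\ge 8$ (or $9$) vertices, locate a leaf $v$ whose neighbor $u$ has a convenient degree, delete $v$ (possibly together with a short pendant path) to get $T'$, apply the hypothesis to $T'$ to get $\cH'$, and then modify $\cH'$ by adding/replacing a bounded number of $3$-element hyperedges so that the new hypergraph $\cH$ satisfies $EI(\cH)=T$ and again meets the strengthened hypothesis. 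One must check that no \emph{spurious} hyperedges of size $\ge 2$ are created in $EI(\cH)$ — i.e. that the newly inserted hyperedges intersect the old ones in at most one vertex except where intended. (4) Because a single gadget shape may not cover all local configurations (leaf attached to a degree-$2$ vertex vs. a high-degree vertex, ``caterpillar'' spine vs. branching, etc.), one splits the inductive step into a few cases according to the structure near the chosen leaf, each with its own explicit gadget; this is the bookkeeping-heavy part.

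I expect the main obstacle to be exactly this case analysis in step (3)–(4): designing, for each local shape of the tree, a constant-size set of $3$-uniform hyperedges whose pairwise intersections of size $\ge 2$ are \emph{precisely} the new tree edges (and whose intersections with the inherited hyperedges of $\cH'$ are controlled), while simultaneously preserving the strengthened hypothesis so the induction can continue. The other delicate point is getting the induction \emph{base} high enough: the seven exceptions all have at most $6$ vertices, so the strengthened hypothesis must already hold for every non-exceptional tree on $7$ (and perhaps $8$) vertices, which means those must be verified by explicit construction rather than by the inductive machinery — this is the ``special kind of induction'' alluded to in the introduction. Once the gadgets and the base cases are in place, verifying $EI(\cH)=T$ in each case is a routine intersection computation.
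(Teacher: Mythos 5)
Your overall skeleton — a large induction basis covering all trees on at most $8$ vertices (explicit constructions for the realizable ones, ad-hoc arguments for the seven exceptions), followed by an induction that deletes a pendant structure and reattaches it with a bounded gadget — is exactly the paper's plan. But two of your key technical choices are problematic. First, your route to non-realizability of $P_2,\dots,P_6,T7,T12$ via Theorem~\ref{STA}(ii) is a dead end: that statement only applies to a pair of hyperedges $e_1,e_2$ of the \emph{target} hypergraph with $|e_1\cap e_2|\ge 2$ and neither containing the other, and in a $2$-uniform target (a tree) two distinct edges never share two vertices, so the hypothesis is vacuous and the theorem yields no constraint at all. The paper instead proves non-realizability by a direct case analysis: it lists all ``useful'' $3$-sets (those containing a tree edge), notes each tree edge needs at least two of them in $\cH$, and runs through the six ways of generating a fixed edge, deriving a forbidden intersection in every branch. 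There is no short parity or counting shortcut visible; for $P_6$ and $T12$ this takes six cases with subcases.

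Second, your strengthened induction hypothesis (carrying along a privileged leaf with a prescribed ``reserve'' configuration of hyperedges) is never actually specified, and it turns out to be unnecessary. The paper's trick is to delete an entire \emph{shortest leg} $l=(v_0,v_1,\dots,v_s)$ (a maximal pendant path from a joint of degree $\ge 3$ to a leaf), not a single leaf. Since a non-path tree has at least three legs, the shortest one has at most $n/3$ vertices, so the residual tree $T'\ominus l$ still has more than $6$ vertices and the \emph{plain} induction hypothesis applies. The reattachment gadget is then $\{u,v_0,v_1\}$ for one old neighbor $u$ of $v_0$, the consecutive triples along the leg, and $\{w,v_{s-1},v_s\}$ for $w$ an end vertex of a different leg; because the new vertices $v_1,\dots,v_s$ are disjoint from $V(T'\ominus l)$, each new hyperedge meets the old vertex set in at most one vertex (two for $\{u,v_0,v_1\}$, but that can only reproduce the existing edge $\{u,v_0\}$), so no spurious intersections arise and no case split on the local shape of the tree is needed beyond $s=1$ versus $s\ge 2$. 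As written, your proposal leaves both the gadget and the strengthened hypothesis as placeholders, so the inductive step is not yet a proof; adopting the shortest-leg deletion would let you drop the strengthening entirely.
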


\medskip
The proof will be done by induction. The  induction basis includes the investigation of all 48 trees having at most 8 vertices (see Lemma 1 -- Lemma 3 below). Note that this set of trees contains the  seven exceptional cases mentioned above.

The  inductive step will make use of the deletion of a (shortest) so-called {\em leg} in a tree.

\pagebreak

\subsection{Induction basis}
\label{Sec_basis}

Above all, for two simple classes of trees, namely paths and stars, we easily obtain a first result.

\begin{lemma}
\begin{enumerate}
\item[(i)]
For $n \ge 3$, the star $K_{1,n}$ is an edge intersection hypergraph of a 3-uniform hypergraph.
\item[(ii)]
For $n=1$ and for $n\ge7$, the path $P_n$ is an edge intersection hypergraph of a 3-uniform hypergraph.
\end{enumerate}
\end{lemma}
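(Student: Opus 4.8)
The plan is to handle the two parts separately, since they involve quite different combinatorial tricks, and in both cases the construction proceeds by explicitly exhibiting a 3-uniform hypergraph $\cH$ together with a verification that $EI(\cH)$ is exactly the desired graph.

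\textbf{Part (i): stars.} For $K_{1,n}$ with center $c$ and leaves $v_1,\dots,v_n$, the edge set of $EI(\cH)$ must be precisely $\{cv_1,\dots,cv_n\}$, i.e.\ every hyperedge intersection of size $\ge 2$ is one of the pairs $\{c,v_i\}$, and each such pair actually occurs. A natural first attempt is to take the $n$ hyperedges $e_i=\{c,v_i,x\}$ for a fresh auxiliary vertex $x$; then $e_i\cap e_j=\{c,x\}$, which is wrong. Instead I would use a ``twisted'' construction: put $e_i=\{c,v_i,v_{i+1}\}$ for $i=1,\dots,n-1$ (indices not cyclic) and add one more hyperedge to pick up the last star edge $cv_n$ and to kill the unwanted intersections $v_i v_{i+1}$. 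Concretely, one checks $e_i\cap e_{i+1}=\{c,v_{i+1}\}$ and $e_i\cap e_j=\{c\}$ for $|i-j|\ge 2$; this already produces $cv_2,\dots,cv_{n-1}$. To also produce $cv_1$ and $cv_n$ one adjoins e.g.\ $e_0=\{c,v_1,v_n\}$, which gives $e_0\cap e_1=\{c,v_1\}$ and $e_0\cap e_{n-1}=\{c,v_n\}$ and $e_0\cap e_j=\{c\}$ otherwise. For small $n$ (n=3) one verifies directly. The key point to get right is that no intersection of size $2$ other than the star edges appears and that the vertex set is exactly $\{c,v_1,\dots,v_n\}$ (so no auxiliary vertex $x$ may be used — hence the twisted, rather than the naive, construction).

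\textbf{Part (ii): paths $P_n$ for $n\ge 7$.} Here Corollary~1 is not available (it gives cycles), but its proof idea is: the hyperpath $\hat{\cP}_m^3$ on vertices $v_1,\dots,v_m$ has $EI(\hat{\cP}_m^3)$ equal to a path plus isolated vertices — indeed Theorem~4(iii)'s computation shows $\cE(EI(\hat{\cP}_m^3))=\{\,v_iv_{i+1}\mid i=2,\dots,m-2\,\}$, which is a $P_{m-3}$ on $v_2,\dots,v_{m-2}$ together with the isolated vertices $v_1$ and $v_m$. That is the wrong vertex set. So the real work is to modify $\hat{\cP}_m^3$ near its two ends so that $v_1$ and $v_m$ become genuine path-endpoints in $EI(\cH)$ without disturbing the interior. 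I would take the hyperpath edges $e_i=\{v_i,v_{i+1},v_{i+2}\}$ on $v_1,\dots,v_{n-1}$ (so $n-3$ hyperedges cover a path on $v_2,\dots,v_{n-2}$) and then add two extra ``cap'' hyperedges, one at each end, of the form $\{v_1,v_2,w_{\text{left}}\}$-style but reusing existing vertices: e.g.\ add $f_L=\{v_1,v_2,v_3\}$'s partner and $f_R$ symmetrically, chosen so that $f_L$ intersects $e_1$ in exactly $\{v_1,v_2\}$ and nothing else in size $\ge 2$, thereby attaching the pendant edge $v_1v_2$. The clean way to do this is to realize $P_n$ as $EI$ of the hyperpath on a slightly longer vertex set with the last hyperedge ``folded back,'' using the $n<2d-1$ phenomenon in Theorem~4(iii) locally at each end. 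I expect this end-correction to be the main obstacle: one must simultaneously (a) create the two missing end-edges $v_1v_2$ and $v_{n-1}v_n$, (b) ensure the caps do not create spurious size-$2$ or size-$3$ intersections with the bulk hyperedges or with each other, and (c) keep $\cH$ $3$-uniform on exactly $n$ vertices. The case $n=1$ is trivial ($\cH$ with one isolated vertex, $EI(\cH)$ has no edges). Small cases just below the threshold ($n=7$) may need a separate ad hoc hypergraph exhibited by hand, which is consistent with the paper's stated strategy of checking all trees up to $8$ vertices directly in the induction basis.

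Throughout, the verification in each case is the same routine: list all $\binom{|\cE|}{2}$ pairwise intersections, confirm each has size $\le 1$ unless it equals one of the target path/star edges, and confirm each target edge arises; this is mechanical once the construction is fixed, so the creative content is entirely in choosing the hyperedges — in particular the ``twist'' for stars and the ``fold-back caps'' for paths.
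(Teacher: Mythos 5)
Your part (i) is correct and is in substance the paper's own construction: the hyperedges $\{c,v_i,v_{i+1}\}$ taken cyclically around the leaves (your $e_1,\dots,e_{n-1}$ together with $e_0=\{c,v_1,v_n\}$) are exactly the paper's $\cE=\{\{1,2,3\},\{1,3,4\},\dots,\{1,n,n+1\},\{1,n+1,2\}\}$, and your verification that consecutive hyperedges meet in $\{c,v_{i}\}$ while non-consecutive ones meet only in $\{c\}$ is the whole proof.

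Part (ii) has a genuine gap: you correctly identify the strategy (the hyperpath $\hat{\cP}_n^3$ realizes all interior edges but leaves $v_1$ and $v_n$ isolated, so one must add two end-caps), but you never actually exhibit the cap hyperedges --- you only state the properties they must have (``chosen so that $f_L$ intersects $e_1$ in exactly $\{v_1,v_2\}$ and nothing else in size $\ge 2$'') and concede that this is ``the main obstacle.'' Since the entire content of the lemma is the explicit construction, this leaves the proof incomplete. The paper's choice is $\cE=\{\{1,2,3\},\{2,3,4\},\dots,\{n-2,n-1,n\}\}\cup\{\{1,2,n-2\},\{3,n-1,n\}\}$: the third vertex of each cap is taken deep inside the opposite end of the path, so that $\{1,2,n-2\}$ meets $\{1,2,3\}$ in $\{1,2\}$, meets every other bulk hyperedge in at most one vertex (this needs $n-2\ge 5$, i.e.\ $n\ge 7$, so that no bulk hyperedge contains $n-2$ together with $1$ or $2$), and is disjoint from the other cap $\{3,n-1,n\}$; symmetrically for the right end. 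This also removes your worry about $n=7$ needing ad hoc treatment --- the general construction already works there (caps $\{1,2,5\}$ and $\{3,6,7\}$). Two further small slips that do not affect the argument: $EI(\hat{\cP}_m^3)$ is a $P_{m-2}$ on $v_2,\dots,v_{m-1}$ (not a $P_{m-3}$ on $v_2,\dots,v_{m-2}$), and the ``$n<2d-1$ fold-back phenomenon'' of Theorem~4(iii) is not really what is being used here, since that regime creates an extra edge joining the two ends rather than a pendant edge at one end.
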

\begin{proof}
\begin{enumerate}
\item[(i)] Let $K_{1,n} = ( V, E)$ with $V=\{ 1,2, \ldots, n, n+1 \}$,
$E=\{ \{ 1,2 \}, \{ 1,3 \}, \ldots, \{1,n \}, \{1, n+1 \} \}$ and $\cH = ( V, \cE)$ with
$\cE = \{ \{ 1,2,3 \}, \{ 1,3,4 \}, \ldots, \{ 1, n, n+1 \}, \{ 1, n+1, 2 \} \}$.
Then $K_{1,n} = EI(\cH)$.

\item[(ii)] Let $n \ge 7$ and $P_n =  \hat{\cP}_n^2 =(V, E)$; for simplicity we identify the vertices $v_i \in V$ with their indices: $v_i = i$.

With $\cH = ( V, \cE )$, where $\cE = \{ \{ 1,2,3 \}, \{2,3,4 \}, \ldots, \{ n-2, n-1, n \}, \{ 1,2, n-2 \}, \{n-1, n, 3 \} \}$, we have $P_n = EI(\cH)$. \hqed
\end{enumerate} \end{proof}

Now we discuss the seven exceptional trees.

\begin{lemma}
$T2=P_2, T3=P_3, T5=P_4, T8=P_5, T14=P_6, T7$ and $T12$ are not edge intersection hypergraphs of a 3-uniform hypergraph.
\end{lemma}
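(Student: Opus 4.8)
The plan is to show that each of the seven trees $T2, T3, T5, T8, T14, T7, T12$ fails to be an edge intersection hypergraph of a 3-uniform hypergraph by deriving a contradiction from the assumption $T = EI(\cH)$ for some 3-uniform $\cH$. The unifying tool is Theorem~\ref{STA}(ii): whenever $\cH = EI(\cH')$ contains two edges $e_1, e_2$ with $|e_1 \cap e_2| \ge 2$, neither contained in the other, then there must be a \emph{third} edge $\tilde e$ of $\cH$ with $e_1 \cap e_2 \subseteq \tilde e$. In a tree, every edge has cardinality $2$, so any two edges sharing a vertex $v$ play the role of $e_1, e_2$ with $e_1 \cap e_2 = \{v\}$; hence Theorem~\ref{STA}(ii) says: if a vertex $v$ has degree $\ge 2$ in $T = EI(\cH)$, then some edge of $T$ must \emph{contain} $\{v\}$ beyond the two edges at $v$ — but that is automatic and gives nothing. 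So the real leverage must come from a direct, ad hoc analysis of what a 3-uniform $\cH$ with $EI(\cH) = T$ would have to look like.

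First I would record the elementary counting facts about $EI(\cH)$ for 3-uniform $\cH$. If $\cH$ has edges $e_1, e_2$ with $|e_1 \cap e_2| = 2$, then $e_1 \cap e_2 \in \eiE$ is a $2$-element edge; if $|e_1 \cap e_2| = 3$ then $e_1 = e_2$, excluded. So for a 3-uniform $\cH$, every edge of $EI(\cH)$ has cardinality exactly $2$, i.e. $EI(\cH)$ is always a graph — consistent with wanting it to be a tree. The key restriction: an edge $\{a,b\}$ appears in $EI(\cH)$ iff some pair of distinct hyperedges of $\cH$ meets in exactly $\{a,b\}$. I would then argue, small-case by small-case, that the number of vertices and the degree sequence of the target tree are incompatible with any 3-uniform $\cH$. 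For the tiny paths $P_2, P_3, P_4, P_5$: a single hyperedge contributes nothing; two hyperedges meeting in two points contribute one edge, and their union has at most $4$ vertices, but then $EI$ has only that one edge plus isolated vertices, so $EI(\cH)$ restricted to non-isolated vertices is a single edge $K_2$ — matching $P_2$ would need $\cH$ on exactly the $2$ non-isolated vertices plus allow isolated ones; I would check that even $P_2$ fails because the two hyperedges forcing the edge $\{a,b\}$ each contain a third vertex, and those third vertices generate further intersections unless they coincide, and tracking this forces either a larger edge set than $P_2$ or fewer. One carries this bookkeeping up through $P_6$.

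For $T7$ and $T12$ (the two non-path exceptional trees on $6$ vertices) the argument is more delicate and is, I expect, the main obstacle: here one cannot just count vertices, because $6$-vertex trees can plausibly arise. The plan is to fix the claimed isomorphism $EI(\cH) = T7$ (resp. $T12$), look at a vertex $v$ of maximum degree in the tree, and ask which hyperedges of $\cH$ realize the $\deg(v)$ edges incident to $v$. Each edge $\{v, u_i\}$ of $T$ comes from a pair of hyperedges intersecting exactly in $\{v, u_i\}$; I would do a careful case distinction on how these pairs overlap — whether a single hyperedge through $v$ serves several of the $u_i$ — and in each branch exhibit either a spurious edge of $EI(\cH)$ not present in $T$ (e.g. an edge among the ``third vertices'' of the hyperedges, or an edge making a cycle) or show that some required edge of $T$ cannot be produced. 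Theorem~\ref{STA}(ii) and (iii) are the structural obstruction in the background: $T7$ and $T12$ presumably contain a configuration forcing, via (ii), an extra edge that a tree cannot have. I would finish by remarking that these seven exclusions are sharp, since Lemma~1 and Lemma~3 (the case analysis of all remaining trees up to $8$ vertices) together with the inductive step of Theorem~7 handle everything else; but the present lemma needs only the non-existence direction, which is the finite check just described.
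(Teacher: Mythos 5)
Your outline identifies the right strategy --- it is essentially the paper's own: restrict attention to ``useful'' hyperedges (3-sets containing two adjacent vertices of the target tree), observe that each tree edge $\{a,b\}$ must arise as the exact intersection of two such hyperedges, and then case-split on which pairs realize each edge until every branch produces a spurious edge of $EI(\cH)$ or fails to produce a required one. But the proposal never executes this analysis, and for this lemma the execution \emph{is} the proof. The nontrivial cases $P_6$, $T7$ and $T12$ each require an exhaustive enumeration (in the paper, six main cases with subcases for $P_6$ and for $T12$, tracking which of roughly fifteen candidate hyperedges are forced in or out), and nothing in your text certifies that every branch actually closes; ``in each branch exhibit either a spurious edge \ldots or show that some required edge cannot be produced'' is a description of the desired outcome, not a verification of it. Since the lemma is a finite non-existence claim, a reader cannot check it from your proposal. (Minor factual slips reinforce this: $T7$ is a $5$-vertex tree, not a second $6$-vertex one, and for $P_2$ the vertex set of $\cH$ must equal that of $P_2$, so no $3$-uniform hyperedge exists at all --- your ``third vertices'' argument tacitly adds vertices the definition does not permit.)

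A second, more conceptual problem is your repeated appeal to Theorem~1(ii) as ``the structural obstruction in the background.'' That statement requires two edges $e_1,e_2$ of the target hypergraph with $|e_1\cap e_2|\ge 2$; two distinct edges of a tree share at most one vertex, so the hypothesis is never met and the theorem says nothing about trees --- not even the vacuous statement you extract from it. The paper's proof of this lemma makes no use of Theorem~1(ii), and no configuration in $T7$ or $T12$ can invoke it. The actual obstruction is purely combinatorial: every candidate pair of hyperedges generating one tree edge forces, through the third vertices of those hyperedges, intersections that are not edges of the tree. You should drop the Theorem~1(ii) framing and supply the full case analysis (or a computer-checkable equivalent) for $P_4$, $P_5$, $P_6$, $T7$ and $T12$.
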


\begin{proof}
In the following, we give the most laborious proofs for $P_6$ and $T12$, respectively. All other cases can be shown  in a similar, but easier way.
\begin{enumerate}

\item[$(P_6)$]
Note that we have $E(P_6) = \{ \{1,2 \}, \{ 2,3 \}, \{ 3,4 \}, \{ 4,5 \}, \{5,6 \} \}$.

For any graph $G$, to generate the edges of $G = EI(\cH)$, in $\cH$ we need only hyperedges $e$ which contain at least two of the adjacent vertices of $G$. Such hyperedges will be called {\em useful hyperedges}.

In the present case, we have $G=P_6$ and a useful hyperedge $e$ has to fulfil  $\{i, i+1 \} \subset e$, where $i \in \{1,2,\ldots, 5 \}$. Therefore, the useful hyperedges which may occur in $\cH = (V, \cE)$ are the following:

123, 124, 125, 126 -- to generate $12$ in $P_6$;

234, 235, 236 -- to generate $23$  (should the occasion arise, in connection with 123);

134, 345, 346 -- to generate $34$  (\ldots with 234);

145, 245, 456 -- to generate $45$  (\ldots with 345);

156, 256, 356 -- to generate $56$  (\ldots with 456).

Clearly, each of the edges $\{i, i+1 \}$ in $P_6$ is contained in exactly four useful hyperedges and at least two of these hyperedges have to appear in $\cH$ to generate $\{i, i+1 \}$ in $EI(\cH) = P_6$.

By case distinction, we discuss the possible combinations of useful hyperedges in $\cH$ and will obtain a contradiction (abbreviated by the symbol $\lightning$) in every case. We have six possibilities to generate $12 \in \cE(EI(\cH)) = E(P_6)$.

\begin{itemize}
\item[(a)] If $123, 124 \in \cE$, then $134 \notin \cE$ (otherwise $13 \in E(P_6)$ $\lightning$) and $234 \notin \cE$ (otherwise $24 \in E(P_6)$ $\lightning$). Therefore, $345, 346 \in \cE$ in order to generate $34  \in E(P_6)$.

    On the other hand,  $145 \notin \cE$ (otherwise $14 \in E(P_6)$ $\lightning$) and $245 \notin \cE$ (otherwise $24 \in E(P_6)$ $\lightning$). Hence,  $456 \in \cE$ in order to generate $45  \in E(P_6)$.

    This includes $346 \cap 456 = 46 \in \cE(EI(\cH)) = E(P_6)$ $\lightning$.

\item[(b)] If $123, 125 \in \cE$, then $145 \notin \cE$ (otherwise $15 \in E(P_6)$ $\lightning$) and $245 \notin \cE$ (otherwise $25 \in E(P_6)$ $\lightning$).
    Therefore, $345, 456 \in \cE$ in order to generate $45  \in E(P_6)$.

    On the other hand,  $156 \notin \cE$ (otherwise $15 \in E(P_6)$ $\lightning$) and $256 \notin \cE$ (otherwise $25 \in E(P_6)$ $\lightning$). Hence,  $356 \in \cE$ in order to generate $56  \in E(P_6)$.

   This includes $345 \cap 356 = 35 \in \cE(EI(\cH)) = E(P_6)$ $\lightning$.

\item[(c)]  If $123, 126 \in \cE$, then $156 \notin \cE$ (otherwise $16 \in E(P_6)$ $\lightning$) and $256 \notin \cE$ (otherwise $26 \in E(P_6)$ $\lightning$).
    Therefore, $356, 456 \in \cE$ in order to generate $56  \in E(P_6)$.

    On the other hand,  $134 \notin \cE$ (otherwise $13 \in E(P_6)$ $\lightning$), $345 \notin \cE$ (otherwise $35 \in E(P_6)$ $\lightning$) and $346 \notin \cE$ (otherwise $46 \in E(P_6)$ $\lightning$). Hence,  $34 \notin \cE(EI(\cH)) = E(P_6)$ $\lightning$.

\item[(d)]  If $124, 125 \in \cE$, then $123 \notin \cE$ (because of (a), (b)), $234 \notin \cE$ (otherwise $24 \in E(P_6)$ $\lightning$) and  $235 \notin \cE$ (otherwise $25 \in E(P_6)$ $\lightning$).

    Consequently,  $23 \notin \cE(EI(\cH)) = E(P_6)$ $\lightning$.

\item[(e)]  If $124, 126 \in \cE$, then $123 \notin \cE$ (because of (a), (c)), $234 \notin \cE$ (otherwise $24 \in E(P_6)$ $\lightning$) and  $236 \notin \cE$ (otherwise $26 \in E(P_6)$ $\lightning$).

    Consequently,  $23 \notin \cE(EI(\cH)) = E(P_6)$ $\lightning$.

\item[(f)]  If $125, 126 \in \cE$, then $123 \notin \cE$ (because of (b), (c)), $235 \notin \cE$ (otherwise $25 \in E(P_6)$ $\lightning$) and  $236 \notin \cE$ (otherwise $26 \in E(P_6)$ $\lightning$).

    Consequently,  $23 \notin \cE(EI(\cH)) = E(P_6)$ $\lightning$.

\end{itemize}
 This implies that 12 cannot be generated in $EI(\cH) = P_6$ $\lightning$. Therefore, $P_6$ is not an edge  intersection hypergraph of a 3-uniform hypergraph.

\item[($T12$)] The most laborious  case is the graph $T12 = ( V, E)$ with $E= \{ \{ 1,2\}, \{2,3 \},\{3,4 \}, \{ 2,5 \}, \{ 5,6 \} \}$.

The useful hyperedges which may occur in $\cH = (V, \cE)$ are the following:

123, 124, 125, 126 -- to generate $12$ in $T12$;

234, 235, 236 -- to generate $23$  (should the occasion arise, in connection with 123);

134, 345, 346 -- to generate $34$ (\ldots with 234);

245, 256 -- to generate $25$  (\ldots with 125 or 235);

156, 356, 456 -- to generate $56$  (\ldots with 256).


Again, we discuss the  combinations of useful hyperedges in $\cH$ and will obtain a contradiction  in every case. Obviously, we have six possibilities to generate $12 \in \cE(EI(\cH)) = E(T12)$ and in some cases several subcases have to be investigated.

\begin{itemize}
\item[(a)] If $123, 124 \in \cE$, then $134 \notin \cE$ (otherwise $13 \in E(T12)$ $\lightning$) and $234 \notin \cE$ (otherwise $24 \in E(T12)$ $\lightning$). Therefore, $345, 346 \in \cE$ in order to generate $34  \in E(T12)$.

    Hence,  $235 \notin \cE$ (otherwise $35 \in E(T12)$ $\lightning$). In order to generate $23  \in E(T12)$ it follows $236 \in \cE$.

    This includes $236 \cap 346 = 36 \in \cE(EI(\cH)) = E(T12)$ $\lightning$.

\item[(b)] If $123, 125 \in \cE$, then $134 \notin \cE$ (otherwise $13 \in E(T12)$ $\lightning$) and $156 \notin \cE$ (otherwise $15 \in E(T12)$ $\lightning$).

    In order to generate the edge $23 \in E(T12)$, the three subcases (b1), (b2) and (b3) have to be considered.

\begin{itemize}
\item[(b1)]If $ 234 \in \cE$, then for $34 \in E(T12)$ we need $345 \in \cE$ or $346 \in \cE$.

Assume, $345 \in \cE$. Then $356 \notin \cE$ (otherwise $35 \in E(T12)$ $\lightning$) and $456 \notin \cE$ (otherwise $45 \in E(T12)$ $\lightning$). Together with $156 \notin \cE$ (see (b) above), we obtain $56 \notin E(T12)$ $\lightning$.

So assume $346 \in \cE$. Then $356 \notin \cE$ (otherwise $36 \in E(T12)$ $\lightning$) and $456 \notin \cE$ (otherwise $46 \in E(T12)$ $\lightning$). As above,  $56 \notin E(T12)$ $\lightning$.

Consequently, (b1) cannot occur.

\item[(b2)] If $ 235 \in \cE$, then $ 345 \notin \cE$ follows (otherwise $35 \in E(T12)$ $\lightning$). Since $134 \notin \cE$ (see (b)) and $ 234 \notin \cE$ (see (b1)), we easily get $34 \notin E(T12)$ $\lightning$.\\[0.5ex]
\hspace*{-9mm} So the only possibility in case (b) would be the next one.
\\[-1ex]

\item[(b3)] Let $ 236 \in \cE$.
We have $ 256 \notin \cE$ (otherwise $26 \in E(T12)$ $\lightning$) and, additionally, since (b2) is impossible, also $235 \notin \cE$.  Hence for $25 \in E(T12)$ we need $245 \in \cE$.

Since $156 \notin E(T12)$ (see at the beginning of (b)) for $56 \in E(T12)$ necessarily $356, 456 \in \cE$. This provides $245 \cap 456 = 45  \in \cE(EI(\cH)) = E(T12)$ $\lightning$.

\end{itemize}

Thus Case (b) cannot occur.

\item[(c)]  If $123, 126 \in \cE$, then $125 \notin \cE$ (because of (b)), $156 \notin \cE$ (otherwise $16 \in E(T12)$ $\lightning$) and $256 \notin \cE$ (otherwise $26 \in E(T12)$ $\lightning$).

    Therefore, it follows $235, 245 \in \cE$ in order to generate $25  \in E(T12)$ as well as $356, 456 \in \cE$ in order to generate $56  \in E(T12)$.

    But then $235 \cap 356 = 35  \in \cE(EI(\cH)) = E(T12)$ $\lightning$.

\item[(d)]  If $124, 125 \in \cE$, then $123 \notin \cE$ (because of (a), (b)), $134 \notin \cE$ (otherwise $14 \in E(T12)$ $\lightning$) and  $234 \notin \cE$ (otherwise $24 \in E(T12)$ $\lightning$).

    Consequently,  we need $345, 346 \in \cE$ in order to generate $34  \in E(T12)$. Thus $236 \notin \cE$ (otherwise $36 \in E(T12)$ $\lightning$). Together with $123, 234 \notin \cE$  we obtain $23 \notin \cE(EI(\cH)) = E(T12)$ $\lightning$.

\item[(e)]  If $124, 126 \in \cE$, then $123 \notin \cE$ (because of (a), (c)), $234 \notin \cE$ (otherwise $24 \in E(T12)$ $\lightning$) and  $236 \notin \cE$ (otherwise $26 \in E(T12)$ $\lightning$).

    Consequently,  $23 \notin \cE(EI(\cH)) = E(T12)$ $\lightning$.

\item[(f)]  If $125, 126 \in \cE$, then $123 \notin \cE$ (because of (b), (c)) and $236 \notin \cE$ (otherwise $26 \in E(T12)$ $\lightning$). So we need $234, 235 \in \cE$ in order to generate $23 \in E(T12)$.
    On the other hand, $156 \notin \cE$ (otherwise $15 \in E(T12)$ $\lightning$)
     and  $256 \notin \cE$ (otherwise $26 \in E(T12)$ $\lightning$). Hence necessarily $356, 456 \in \cE$ in order to generate $56 \in E(T12)$.

   This leads to $235 \cap 356 = 35  \in \cE(EI(\cH)) = E(T12)$ $\lightning$.

\end{itemize}
 This implies that 12 cannot be generated in $EI(\cH) = T12$ $\lightning$. Therefore, $T12$ is not an edge intersection hypergraph of a 3-uniform hypergraph. \hqed
\end{enumerate}

\end{proof}

\begin{lemma}
All trees with at most eight vertices are edge intersection hypergraphs of a 3-uniform hypergraph, but $T2=P_2, T3=P_3, T5=P_4, T8=P_5, T14=P_6, T7$ and $T12$.
\end{lemma}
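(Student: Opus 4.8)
The plan is to prove the statement by a finite case analysis over the $48$ trees on at most eight vertices enumerated in \cite{RW}, combining the three lemmas. The ``negative'' part is already available: by Lemma~2 none of $T2,T3,T5,T7,T8,T12,T14$ is the edge intersection hypergraph of a $3$-uniform hypergraph. Among the remaining $41$ trees, the paths $P_1,P_7,P_8$ and the stars $K_{1,3},K_{1,4},K_{1,5},K_{1,6},K_{1,7}$ are taken care of by Lemma~1. Hence it remains to exhibit, for each of the other $33$ trees $T=(V,E)$ (the non-path, non-star trees different from $T7$ and $T12$), a concrete $3$-uniform hypergraph $\cH=(V,\cE)$ with $EI(\cH)=T$, and to verify in each instance that it does the job.

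For the constructions I would use one uniform recipe, generalising the one used for $P_n$ in the proof of Lemma~1(ii). Given $T$, order its edges $f_1,\ldots,f_m$ so that consecutive ones share a vertex whenever possible (for a tree this is essentially a depth-first traversal of the edge set), replace each $f_k$ by a triple $e_k=f_k\cup\{w_k\}$ with the third vertex $w_k$ chosen so that $e_k\cap e_{k+1}$ equals exactly one edge of $T$, and finally add one or two ``closing'' triples, in the spirit of the hyperedges $\{1,2,n-2\}$ and $\{3,n-1,n\}$ used for $P_n$, whose third vertices are picked far from the edge they are meant to generate. Since every tree here has at most seven edges, only a handful of triples is needed, and the whole list of the $33$ hypergraphs fits into a table organised by $|V|\in\{6,7,8\}$ (there is nothing to do for $|V|\le 5$, because every tree on at most five vertices is a path, a star, or $T7$). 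For each table entry the verification $EI(\cH)=T$ is then mechanical: (a)~for every $\{i,j\}\in E$ point to two triples of $\cE$ whose intersection is exactly $\{i,j\}$; and (b)~check that no two triples of $\cE$ meet in three vertices — so that $\cH$ has no repeated hyperedge and $EI(\cH)$ is $2$-uniform — and that no two triples meet in a $2$-set outside $E$.

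The only genuine difficulty lies in step~(b), the avoidance of \emph{spurious} edges. A tree on up to eight vertices already carries up to seven edges while we are allowed only $3$-element hyperedges, so the triples overlap heavily and a careless choice of the third vertices $w_k$ (or of a closing triple) immediately creates an extra adjacency, or even a cycle, and $EI(\cH)$ is no longer the target tree. Controlling these overlaps is exactly what makes the selection of third vertices delicate and forces a tree-by-tree treatment rather than a single closed formula; it is also precisely the obstruction which, pushed to its extreme, produces the non-realisability of $P_6$, $T7$ and $T12$ in Lemma~2. Once the table of hypergraphs has been fixed, the proof is finished: together with Lemmas~1 and~2 it accounts for all $48$ trees on at most eight vertices, and the seven listed exceptions are exactly those that cannot be realised.
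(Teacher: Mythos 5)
Your overall framework is exactly the paper's: Lemma~2 handles the seven exceptions, Lemma~1 handles the paths and stars among the remaining trees, and the lemma then reduces to exhibiting, for each of the other $33$ trees $Tn=(V_n,E_n)$, a concrete $3$-uniform hypergraph $\cH_n$ with $EI(\cH_n)=Tn$ and verifying this (by hand or by computer). Your bookkeeping ($48-7-3-5=33$, and the observation that every tree on at most five vertices is a path, a star, or $T7$) is also correct.

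The genuine gap is that you never produce the $33$ witnesses, and for this lemma the witnesses \emph{are} the proof: the statement is an existence claim settled by a finite list of explicit constructions, each of which must be checked for the absence of spurious intersections. What you offer instead is a heuristic recipe --- one triple $f_k\cup\{w_k\}$ per tree edge along an edge ordering, plus one or two closing triples as for $P_n$ --- together with the (correct) remark that choosing the third vertices is delicate. But you give no argument that this recipe can always be completed successfully, and the existence of the exceptions $P_6$, $T7$, $T12$ shows that no such argument can be purely generic; it must engage with the individual trees. Moreover the paper's actual witness hypergraphs do not all fit your template: for trees with a vertex of degree $\ge 3$ the constructions reuse hyperedges in a star-like way (e.g.\ $\cE_{10}=\{123,124,235,236,345,346\}$ realises $E_{10}=\{12,23,34,35,36\}$ with $34=345\cap 346$, $35=235\cap 345$, $36=236\cap 346$), and several $\cE_n$ contain more hyperedges than ``number of tree edges plus two''. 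So the step ``fix the table of hypergraphs'' is not a routine formality you may defer; until the table is written down and each entry verified (conditions (a) and (b) in your own notation), the lemma is not proved.
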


\begin{proof}
Because of Lemma 1 and Lemma 2 for the trees $T1 - T9$, $T12$, $T14$, $T15$, $T25$, $T26$ and $T48$ there is nothing to show.

For the remaining 33 trees $Tn = (V_n, E_n)$ in each case we give the edge set $E_n$ and the set of hyperedges $\cE_n$ of a 3-uniform hypergraph $\cH_n = (V_n, \cE_n)$ with $Tn = EI(\cH_n)$.
The verification of $\cE(EI(\cH_n)) = E(Tn)$ can be done  by hand for all $n$
 or by computer, e.g. using the computer algebra system MATHEMATICA$^{\textregistered}$ with the function
\\[0.8ex]
\indent
$EEI[eh\_] :=
 Complement[
  Select[Union[Flatten[Outer[Intersection, eh, eh, 1], 1]],$
\vspace{-1ex}
\begin{flushright} $   Length[\#] > 1 \&], eh],$ \end{flushright}

\smallskip
\noindent
where the argument $eh$ has to be the list of the hyperedges of $\cH$ in the form $\{\{a,b,c \}, \ldots, \{x,y,z \} \}$. Then $EEI[eh]$ provides the list of the hyperedges of $EI(\cH)$.

\medskip
\noindent
$n=6$ vertices:

\smallskip
$E_{10} = \{ 12, 23, 34, 35, 36 \}$,
$\cE_{10} = \{ 123, 124, 235, 236, 345, 346 \}.$

$E_{11} = \{ 12, 23, 24, 45, 46 \}$,
$\cE_{11} = \{ 123, 124, 234, 245, 246, 456 \}.$

$E_{13} = \{ 12, 23, 34, 45, 46 \}$,
$\cE_{13} = \{ 123, 125, 234, 345, 346, 456 \}.$

\pagebreak
\noindent
$n=7$ vertices:

\smallskip
$E_{16} = \{ 12, 23, 34, 35, 36, 37 \}$,
$\cE_{16} = \{ 123, 125, 234, 237, 345, 356, 367 \}.$

$E_{17} = \{ 12, 23, 24, 25, 56, 57 \}$,
$\cE_{17} = \{ 123, 124, 234, 256, 257, 567 \}.$

$E_{18} = \{ 12, 23, 24, 25, 56, 67 \}$,
$\cE_{18} = \{ 123, 124, 167, 234, 245, 256, 567 \}.$

$E_{19} = \{ 12, 23, 34, 35, 36, 67 \}$,
$\cE_{19} = \{ 123, 124, 235, 236, 345, 346, 367, 567 \}.$

$E_{20} = \{ 12, 23, 24, 45, 56, 57 \}$,
$\cE_{20} = \{ 123, 124, 234, 456, 457, 567 \}.$

$E_{21} = \{ 12, 23, 34, 36,  45, 47 \}$,
$\cE_{21} = \{ 123, 125, 234, 236, 345, 346, 347, 457 \}.$

$E_{22} = \{ 12, 23, 24, 45,  56, 67 \}$,
$\cE_{22} = \{ 123, 124, 167, 234, 245, 456, 567 \}.$

$E_{23} = \{ 12, 23, 34, 45,  46, 57 \}$,
$\cE_{23} = \{ 123, 126, 157, 234, 345, 346, 456, 457 \}.$

$E_{24} = \{ 12, 23, 34, 36, 45,  67 \}$,
$\cE_{24} = \{ 123, 127, 145, 234, 236, 345, 367, 567 \}.$

\medskip
\noindent
$n=8$ vertices:

\smallskip
$E_{27} = \{ 17, 18, 27, 37, 47, 57, 67 \}$,
$\cE_{27} = \{ 127, 138, 167, 178, 237, 347, 457, 567 \}.$

$E_{28} = \{ 15, 25, 35, 45, 56, 67, 68 \}$,
$\cE_{28} = \{ 125, 145, 235, 345, 567, 568, 678 \}.$

$E_{29} = \{ 14, 24, 34, 45, 56, 57, 58 \}$,
$\cE_{29} = \{ 124, 145, 234, 345, 567, 568, 578 \}.$

$E_{30} = \{ 16, 17, 26, 36, 46, 56, 78 \}$,
$\cE_{30} = \{ 126, 156, 167, 178, 236, 278, 346, 456 \}.$

$E_{31} = \{ 16, 17, 26, 28, 36, 46, 56 \}$,
$\cE_{31} = \{ 126, 137, 156, 167, 236, 248, 268, 346, 456 \}.$

$E_{32} = \{ 15, 16, 25, 35, 45, 67, 68 \}$,
$\cE_{32} = \{ 125, 145, 167, 168, 235, 345, 678 \}.$

$E_{33} = \{ 15, 16, 17, 25, 35, 45, 78 \}$,
$\cE_{33} = \{ 156, 157, 167, 178, 235, 245, 278, 345 \}.$

$E_{34} = \{ 12, 18, 23, 24, 25, 56, 57 \}$,
$\cE_{34} = \{ 123, 124, 168, 178, 234, 256, 257, 567 \}.$

$E_{35} = \{ 12, 23, 24, 45, 48, 56, 57 \}$,
$\cE_{35} = \{ 123, 124, 234, 248, 456, 457, 458, 567 \}.$

$E_{36} = \{ 12, 23, 24, 28, 45, 56, 67 \}$,
$\cE_{36} = \{ 123, 124, 128, 167, 234, 238, 245, 456, 567  \}.$

$E_{37} = \{ 12, 23, 24, 25, 38, 56, 67 \}$,
$\cE_{37} = \{ 123, 124, 167, 234, 238, 245, 256, 378, 567  \}.$

$E_{38} = \{ 12, 23, 34, 36, 38, 45, 67 \}$,
$\cE_{38} = \{ 123, 127, 145, 234, 236, 345, 348, 367, 368, 567  \}.$

$E_{39} = \{ 12, 23, 24, 45, 48, 56,  67 \}$,
$\cE_{39} = \{ 123, 124, 167, 234, 245, 248,  456, 458, 567  \}.$

$E_{40} = \{ 12, 23, 24, 45,  56,  67, 68 \}$,
$\cE_{40} = \{ 123, 124, 167, 234, 245,  456,  567, 568, 678  \}.$

$E_{41} = \{ 12, 23, 24, 45, 56, 57, 68 \}$,
$\cE_{41} = \{ 123, 124, 168, 234, 456, 457, 567, 568  \}.$

$E_{42} = \{ 12, 23, 34, 38, 45,  46, 57 \}$,
$\cE_{42} = \{ 123, 126, 157, 234, 238, 345, 346, 348, 456, 457 \}.$

$E_{43} = \{ 12, 23, 28, 34, 36, 45,  67 \}$,
$\cE_{43} = \{ 123, 127, 128, 145, 234, 236,238, 345, 367, 567 \}.$

$E_{44} = \{ 12, 23, 24, 45,  56, 67 , 78 \}$,
$\cE_{44} = \{ 123, 124, 178, 234, 245, 456, 567, 678 \}.$

$E_{45} = \{ 12, 23, 34, 36, 45,  67 , 78 \}$,
$\cE_{45} = \{ 123, 127, 145, 234, 236, 345, 367, 578, 678 \}.$

$E_{46} = \{ 12, 23, 24, 38, 45,  56, 67 \}$,
$\cE_{46} = \{ 123, 124, 167, 234, 238, 245, 378,  456, 567 \}.$

$E_{47} = \{ 12, 23, 34, 45,  46, 57, 78 \}$,
$\cE_{47} = \{ 123, 126, 178, 234, 345, 346, 456, 457, 578 \}. \hqed$

\end{proof}

\subsection{Inductive step}
\label{Sec_step}

Let $G=(V, E)$ be a graph and $s \ge 1$.

\begin{definition}
A path $l=(v_0, e_1, v_1, \ldots, v_{s-1}, e_s, v_s)$ is referred to be a {\em leg} (of {\em length} $s$) in $G$ if and only if
\renewcommand{\labelenumi}{(\roman{enumi})}
\begin{enumerate}
\item \quad $V(l) = \{ v_0, \ldots, v_s \} \subseteq V$;
\item \quad $E(l) = \{ e_1, \ldots, e_s \} \subseteq E$;
\item \quad $d_G(v_0) \ge 3, \, d_G(v_1) = \ldots = d_G(v_{s-1}) = 2, \, d_G(v_s)=1$.
\end{enumerate}
\end{definition}

The vertex $v_0$ is the {\em joint} or {\em joint vertex} and $v_s$ is the {\em end vertex} of $l$.
Clearly, every graph $G$ with minimum degree $\delta(G) = 1$ and maximum degree $\Delta(G) \ge 3$ has a leg.
Moreover, each tree $T$ being not a path $P_n$ $(n \ge 1)$ has at least three legs.

\begin{definition}
The graph $G \ominus l = ( V', E')$ results from $G=(V, E)$ by {deleting the leg}  \linebreak $l=(v_0, e_1, v_1, \ldots, v_{s-1}, e_s, v_s)$  if and only if \/
$V' = V \setminus \{ v_1, \ldots, v_s \}$ and $E' = E \setminus \{e_1, \ldots, e_s \}$.
\end{definition}

Note that $G \ominus l$ is connected if and only if $G$ is connected, since the joint vertex $v_0$ is not deleted by  the deletion of the leg $l$ in $G$.

\bigskip
\noindent
{\bf Proof of Theorem 6.}

\smallskip
For the induction basis see subsection 3.1. Note that all trees $T=(V, E)$ with 7 or 8 vertices are edge intersection hypergraphs of a 3-uniform hypergraph (cf. Lemma 3).

\smallskip
{\em Induction hypothesis:} Every  tree $T=(V, E)$ with $7 \le |V| \le n$ is an edge intersection hypergraph of a 3-uniform hypergraph.

\smallskip
Let $n \ge 8$, $T'=(V', E')$ be a tree with $|V'|=n+1$ vertices; because of Lemma 1 we can exclude stars and paths from our considerations.
Therefore $T'$ has at least three end vertices and also at least three legs. Let $v_0$ and $v_s$ be the joint vertex and the end vertex of a shortest leg $ l=( v_0, v_1, \ldots, v_s )$, respectively.

We  delete the leg $l$ in $T'$ and obtain $T =(V, E) = T' \ominus l$. Obviously, $v_0 \in V$ and $v_1, \ldots, v_s \in V' \setminus V$. According to the length $s$ of the leg $l$ we consider two cases.

\medskip
\noindent
\ul{Case 1: $s = 1$.}

\smallskip
Because of $d_T(v_0) \ge 2$ there are at least two neighbors $u \neq u'$ of $v_0$ in the tree $T$. Moreover, we have $|V| = n \ge 8$ and  the induction basis implies the existence of a hypergraph $\cH = (V, \cE)$ with $T = EI(\cH)$ and $\cH$ is 3-uniform.

Consider $\cE' := \cE \, \cup \, \{ \{ u, v_0, v_1 \}, \{ u', v_0, v_1 \} \}$ and the 3-uniform hypergraph $\cH' = ( V', \cE')$. Then $ \{v_0, v_1 \} =  \{u,  v_0, v_1 \} \, \cap \, \{ u', v_0, v_1 \} $.

Clearly, $ \{ u, v_0, v_1 \} \, \cap \, V =  \{ u, v_0 \}$ and $ \{ u', v_0, v_1 \} \, \cap \, V =  \{ u', v_0 \}$. Taking an arbitrarily chosen hyperedge $e \in \cE' \,  \setminus \, \{ \{ u, v_0, v_1 \}, \{ u', v_0, v_1 \} \} = \cE $, the only edge which can result from  the intersection $ \{ u, v_0, v_1 \} \, \cap \, e $ and $ \{ u', v_0, v_1 \} \, \cap \, e $ in $EI(\cH')$ is the edge $\{ u, v_0 \} \in E(T)$ and $ \{ u', v_0 \} \in E(T)$, respectively. Consequently, the hypergraph $\cH'$
 has the edge intersection hypergraph $EI(\cH') = T'$.

\medskip
\noindent
\ul{Case 2: $s \ge 2$.}

\smallskip
Let $l, l', l''$ be three legs  in $T'=(V', E')$. Deleting the legs $l, l', l''$ in $T'$, we would obtain a new tree  with at least one vertex.
Since $l = ( v_0, v_1, \ldots, v_s )$  is a shortest leg and $|V'| = n+1 $ is valid, the leg $l$ contains at most $\frac{n}{3}$ vertices.
%
Because of $n \ge 8$, the deletion of the  leg $l$ corresponds to the deletion of the vertices $v_1, \ldots, v_s$ in $T'$ and leads to the tree $T = (V, E)$ with
$ | V | = n + 1 - s \ge n + 1 - \frac{n}{3} = \frac{2}{3} n + 1 \ge
\frac{19}{3} > 6$. Therefore, $T$ has at least 7 vertices; we apply the induction basis and obtain the existence of a 3-uniform hypergraph $\cH = (V, \cE)$ with $T = EI(\cH)$.

Now we construct the hypergraph $\cH' = (V', \cE')$ from $\cH = (V, \cE)$.

In comparison to $T$, in $T'$ we find  the additional edges $\{ v_0, v_1 \}, \{v_1, v_2 \}, \ldots, \{v_{s-1}, v_s \}$ which have to be generated by certain hyperedges of $\cH'$.
We add the following three types of hyperedges to the hypergraph $\cH$.

\begin{itemize}
\item The first one is the hyperedge $\{u, v_0, v_1 \}$, where $u \in V$ is a neighbor of the vertex $v_0$ in the tree $T$. Because of $v_1 \notin V(T)$, the only edge being induced by this hyperedge and the hyperedges of $\cE(\cH)$ in the edge intersection hypergraph of $\cH_0 = ( V \cup \{ v_1 \}, \cE \cup \{ \{ u, v_0, v_1 \} \})$ is the edge $\{ u, v_0  \} \in E(T)$.
\item The second set of new hyperedges consists of $\{v_0, v_1, v_2 \}, \{v_1, v_2, v_3 \}, \ldots, \{v_{s-2}, v_{s-1}, v_s \}$.
    Adding these hyperedges (and the vertices $v_2, \ldots, v_s$) to $\cH_0$ we obtain a hypergraph  $\cH_1$; in the corresponding edge intersection hypergraph $EI(\cH_1)$ we find  the new edges\\
    $\{ v_0, v_1 \} = \{u, v_0, v_1 \} \, \cap \, \{v_0, v_1, v_2 \},
    \{ v_1, v_2 \} = \{v_0, v_1, v_2 \}  \, \cap \, \{v_1, v_2, v_3 \},  \ldots,
     \{ v_{s-2}, v_{s-1} \} =$ \\ $ \{v_{s-3}, v_{s-2}, v_{s-1} \}  \, \cap \, \{v_{s-2}, v_{s-1}, v_{s} \}$ and not more (because of $v_1, \ldots, v_s \notin V(T)$).
\item To obtain the last edge needed in $T' = EI(\cH')$, we choose a vertex $w \in V(T) \setminus  \{ v_0 \}$ being not a neighbor of $v_0$. The existence of such a vertex $w$ becomes clear since all legs  in $T'$ have to have a length of at least 2. Therefore, for $w$ we can choose an  end vertex of an arbitrary leg $l' \neq l$ in $T$.  Considering $\{ w, v_{s-1}, v_{s} \}$, we see that $\{ w, v_{s-1}, v_{s} \} \, \cap \,  \{v_{s-2}, v_{s-1}, v_s \} =  \{ v_{s-1}, v_s \} $.

    We add the new hyperedge $\{ w, v_{s-1}, v_{s} \}$ to the hypergraph $\cH_1$ and obtain the hypergraph $\cH'$. For two reasons,  $\{ v_{s-1}, v_{s} \}$ is the only edge being generated by the hyperedge $\{ w, v_{s-1}, v_{s} \}$ in $EI(\cH')$.


    \begin{enumerate}
    \item[(i)] $|V(T) \, \cap \, \{ w, v_{s-1}, v_{s} \}| = 1$, therefore the intersection of $\{ w, v_{s-1}, v_{s} \}$ with any hyperedge of the original hypergraph $\cH = ( V, \cE) = (V(T), \cE)$ cannot lead to an additional edge in $EI(\cH')$.
    \item[(ii)] Because of $w \in V \setminus \{u \}$, the intersection of $\{ w, v_{s-1}, v_{s} \}$ with one of the "new" hyperedges $\{u, v_0, v_1 \}, \{v_0, v_1, v_2 \}, \{v_1, v_2, v_3 \}, \ldots, \{v_{s-2}, v_{s-1}, v_s \}$ is always a subset of $\{ v_{s-1}, v_s \} $. \\Hence the edge $\{ v_{s-1}, v_s \} \in E(T')$ is the only edge being induced by $\{ w, v_{s-1}, v_{s} \}$ in $EI(\cH')$. \hqed
    \end{enumerate}

\end{itemize}

\section{Concluding remarks}

       In Corollary 1 and in Section 3 we characterized those cycles and trees which are edge intersection hypergraphs of 3-uniform hypergraphs. In connection with these results several interesting problems occur.

\medskip
{\bf Problem 2.}
 Characterize other classes ${\cal G}$  of graphs being edge intersection hypergraphs of 3-uniform hypergraphs.

\medskip
Obviously, an analog question can be asked for $k$-uniform ($k \ge 3$) instead of 3-uniform hypergraphs $\cH$, maybe in combination with a minimum number of hyperedges in $\cH$.

\medskip
{\bf Problem 3.}
Let ${\cal G}$ be a class of graphs, $k \ge 3$, $n_0 \in \N^+ $, $n \ge n_0$ and $G_n \in {\cal G}$ a graph with $n$ vertices. What is the minimum cardinality $| \cE |$ of the edge set of a $k$-uniform hypergraph $\cH_n =(V, \cE)$ with $EI( \cH_n) = G_n$?

\medskip

We conjecture that the solution of Problem 3 becomes more difficult for $k > 3$.

\end{document}